\newcommand{\q}[1]{``#1''}
\def \R{\mathbb{R}}
\def \N{\mathbb{N}}
\def\interior{\mathrm{int}}
\newtheorem{Theorem}{Theorem}[section]
\newtheorem{Lemma}[Theorem]{Lemma}
\newtheorem{Definition}[Theorem]{Definition}
\newtheorem{Proposition}[Theorem]{Proposition}
\newtheorem{Remark}[Theorem]{Remark}
\newtheorem{Assumption}[Theorem]{Assumption}
\newtheorem{Example}{Example}
\numberwithin{equation}{section}%permet de numeroter les equations suivant le paragraphe auquel elles appartiennent
\newcommand\wavydecor{%
    \draw[decoration={coil,aspect=0.1,segment length=5pt,amplitude=1.0pt},decorate,line width=1.5pt,black]
      (O|-P) -- (O);
}
\newmdenv[
hidealllines=true,
innerleftmargin=10pt,
innerrightmargin=0pt,
innertopmargin=0pt,
innerbottommargin=0pt,
leftmargin=-10pt,
skipabove=.5\baselineskip,
skipbelow=.5\baselineskip,
singleextra={\wavydecor},
firstextra={\wavydecor},
secondextra={\wavydecor},
middleextra={\wavydecor}
]{done}
\title{ Estimation of Systemic Shortfall Risk Measure using Stochastic Algorithms\thanks{Acknowledgements:  The authors research is part of the ANR project DREAMeS (ANR-21-CE46-0002) and benefited from the support of the "Chair Risques Emergents en Assurance" under the aegis of Fondation du Risque, a joint initiative by Le Mans University and Cov\'ea.}}  
\author{Sarah Kaaka\"i~ \thanks
{\small Laboratoire Manceau de Math\'ematiques \& FR CNRS N\textsuperscript{o} 2962, Institut du Risque et de l'Assurance, Le Mans University.}
\and Anis Matoussi
\thanks{ \small  Laboratoire Manceau de Math\'ematiques \& FR CNRS N\textsuperscript{o} 2962, Institut du Risque et de l'Assurance, Le Mans University and Ecole Poyltechnique.
}\\
 \and Achraf Tamtalini~ \thanks
{\small  Laboratoire Manceau de Math\'ematiques \& FR CNRS N\textsuperscript{o} 2962, Institut du Risque et de l'Assurance, Le Mans University.}
}
\date{}
\begin{document}
 \maketitle

  \abstract{   Systemic risk measures were introduced to capture the global risk  and the corresponding contagion effects that
are  generated by an interconnected system of financial institutions. To this purpose, two approaches were suggested. In the first one, systemic risk measures can be interpreted as the minimal amount of cash needed to secure a system \textit{after aggregating} individual risks. In the second approach, systemic risk measures can be interpreted as the minimal amount of cash that secures a system by allocating capital to each single institution \textit{before aggregating} individual risks. Although the theory behind these risk measures has been well investigated by several authors, the numerical part has been neglected so far.
In this paper, we use stochastic algorithms schemes in estimating Multivariate Shortfall Risk Measure (MSRM) and prove that the resulting estimators are consistent and asymptotically normal. We also test numerically the performance of these algorithms on several examples. 
  }

\bigskip

\noindent  {\bf Keywords}: Multivariate risk measures, shortfall risk, stochastic algorithms, stochastic root finding, risk allocations.

%=================================================================
\section*{Introduction}
The axiomatic theory of risk measures, first initiated by the seminal paper of \cite{artzner}, has been widely studied over the past decades. The Value-at-Risk (VaR) is one of the best known and most common risk measures used by practitioners and regulation authorities. However, the  VaR lacks one important property: it does not take into account the diversification effect. To circumvent this problem, the VaR was replaced by the Conditional Value-at-Risk (CVaR) and a more general framework of improved risk measures has been introduced: \textit{Utility-based Shortfall Risk} (SR).
Nevertheless, when it comes to a system of financial institutions or portfolios, the question about how to assess the global risk, as well as individual risks, arises. Following the 2008 crisis, the traditional approach of measuring systemic risk that consists in considering each institution as a single entity isolated from other institutions, has shown some limits. Indeed, with this approach, the risk associated with a vector of positions $X=(X_1,...,X_d)$ can be written as:
$$ R(X) \vcentcolon= \sum_{i=1}^d \eta_i(X_i),$$
where each $\eta_i$ is a univariate risk measure. Then, \cite{chen2013axiomatic} proposed an approach whose spirit is  very close to the axiomatic framework initiated by \cite{artzner}. They showed that any systemic risk measure verifying certain axioms can be written as  the composition of a univariate risk measure $\eta$ and an aggregation $\Lambda$,  i.e.,
$$R(X) = \eta(\Lambda(X)).$$
The previous representation is known as the \q{Aggregate then Add Cash} approach as it consists first in aggregating the positions $X_1,...,X_d$ through the aggregation function $\Lambda$ and then  apply a univariate risk measure. One of the most common ways to aggregate the outcomes $X_i$ is to simply take the sum, that is to consider  $\Lambda(x) = \sum_{i=1}^d x_i$. It is worth noticing that, while summing up profit and losses might seem reasonable from the point of view of a portfolio manager because portfolios' profits and losses compensate each other, this aggregation rule seems inadequate from the point of view of a regulator where cross-subsidization between institutions is not realistic, since no institution will be willing to pay for the losses of another one.\\
Motivated by these considerations, \cite{unified} proposed another approach to measure the systemic risk. They first considered the systemic risk as the minimal capital that secures the system by injecting capital into the single institutions, \textit{before aggregating the individual risks}:
\begin{equation}
\label{eq0}
    R(X) \vcentcolon= \inf \{\sum_{i=1}^d m_i,~\Lambda(X + m) \in \mathbb{A} \},
\end{equation}
where $\mathbb{A}$ is an acceptance set. This approach, known as \q{Add Cash then Aggregate} consists in adding the amount $m_i$ to the financial position $X_i$ before the corresponding total loss $\Lambda(X + m)$ is computed. The systemic risk is then measured as the minimal total amount $\sum_{i=1}^d m_i$ injected into the institutions to make it acceptable. With this approach, a joint measure of total risk as well as individual risk contributions to systemic risk is obtained. If $m^* = (m_1^*,..., m_d^*)$ is an optimum, that is $R(X) = \sum_{i=1}^d m_i^*$ and $\Lambda(X +m^*) \in \mathbb{A}$, one could  rank the $m_i^*$'s and hence be able to say that institution/portfolio $i$ requires more cash allocation or is riskier than $j$ if $m_i^* \ge m_j^*$.\\
In this article, we are interested in the numerical computation of the multivariate shortfall risk measure (MSRM) that was introduced in \cite{armenti}. They are an extension of univariate SR measures, and can be obtained by taking the aggregation function $\Lambda(x) = l(-x)$ where $l$ is a multivariate loss function (see Section \ref{2}) and the acceptance set $\mathbb{A} = \{X \in L^0(\R), E[X] \le 0 \}$.\\
%To meet the regulatory requirements, financial institutions need to develop a reliable risk management framework to face all kind of financial risks associated to their portfolios.  Most of the time, financial institutions use the standard VaR and CVaR although it suffers from some deficiencies. 
%The most common method used to compute the VaR is the inversion of the simulated empirical P\&L distribution function using Monte Carlo or historical simulation tools (see \cite{glasserman} and \cite{glasserman1}).
In \cite{armenti},  MSRMs and their optimal allocations are approximated using a two step procedure. The expectation in the aggregation function  is first approximated using  Monte Carlo, Fourrier or Chebychev schemes. The minimum in equation \eqref{eq0} is then found by  a  built-in minimizing algorithm in python. However, the minimizing algorithm can be sensitive to the starting point of the algorithm, and there are no theoretical results on the convergence of this procedure. In the univariate case, SR can also be characterized as the unique root of a function $g:\R \mapsto \R$ that is expressed as an expectation, which can be estimated by  using a deterministic root finding algorithm combined with a Monte Carlo procedure that estimates $g(s)$ for all  $s \in \R$. \\
Another idea to compute  the VaR and CVaR comes from the fact that they are solutions and the value of the same convex optimization problem as pointed out in \cite{rockafellar}. Moreover, as they can be expressed as an expectation, this led \cite{pages} to define consistent and asymptotically normal estimators of both quantities using a classical Robbins-Monro (RM) procedure.  For SR, a constrained RM algorithm has also been proposed by \cite{stochastic}.\\
In this paper, we introduce stochastic algorithms for MSRM, and prove their consistency and asymptotic normality. Indeed, the optimal allocations of MSRM can also be characterized as the root of a function that is expressed as an expectation. However, the multivariate setting adds additional constraints:
the optimal allocations are characterized as the solution of a first order condition of the Lagrangian associated with the MSRM. Hence, the new problem is to find a saddle point $z^*=(m^*,\lambda^*)\in \R^{d} \times \R^+$, which is the unique  root of a function $h(z) = E[H(X,z)]$ determined by the first order conditions.\\
RM stochastic algorithms have been the subject of a vast literature (see e.g.   \cite{benveniste}, \cite{duflo}). The basic paradigm in its simplest form is to approximate $z^*$ by the following stochastic difference equation: $Z_{n+1} = Z_n + \gamma_n Y_n$, where $\gamma_n > 0$ is the step size that goes to zero as $n \rightarrow \infty$, and $Y_n = H(X_{n+1}, Z_n)$, where $(X_n)$ is a sequence of i.i.d random variables with the same law as $X$. Moreover, if the random variable $X$ is not directly simulatable, \cite{frikha2016multi}  extended the scope of multi-level Monte Carlo to the framework of stochastic algorithms.  In many cases, the analysis of these algorithms is based on  the so-called Ordinary Differential Equation (ODE) method introduced by \cite{ljung}. The main idea is to show that, in the long run, the noise is eliminated so that, asymptotically, the behaviour of the algorithm is determined by that of the average ODE: $\dot{z} = h(z)$. \\
To ensure the convergence of RM algorithms, the  function $h$ is assumed to have sub-linear growth. This assumption is often not verified for MSRM, due to the choice of loss functions. In order to avoid reducing drastically the scope of the applications, we introduce a constrained version of the  RM algorithm for MSRM. 
This consists in using the projection into a compact $K$ each time the sequence $Z_n$ goes out of $K$. First introduced by \cite{sanvicente}, an excellent survey on projection techniques can be found in \cite{kushner}. 
In the case of MSRM, the convergence of the constrained algorithm toward the optimal allocation $z^*$ is then closely related to the study a projected multivariate ODE, whose solutions remain on $K$. We provide a rigorous analysis of the asymptotic stability of this ODE, based on the introduction of a Lyapunov function and on the extension of the Lasalle invariant  principle to this ODE with discontinuous r.h.s. Under additional assumptions, we also establish the asymptotic normality of the estimator. \\
The analysis and estimation of the asymptotic covariance matrix is also more difficult due to the multivariate setting. By introducing an averaged ``Polyak-Ruppert" version of the stochastic algorithm, we obtain a simpler expression of the asymptotic covariance matrix and propose an estimator allowing the computation of empirical confidence intervals.   \\
Stochastic algorithms also offer a flexible framework that can be conveniently combined with variance reduction techniques such as Importance Sampling (IS).  \cite{stochastic} have adapted the RM for unidimensional SR in order to include IS. However, in order to apply this non parametric approach, it is necessary to find beforehand an appropriate  change of measure for each parameter $z$, which can be very difficult even in the one dimensional case.  Inspired by the parametric approach of \cite{pages} for the VaR/CVaR, we also propose an adaptive RM algorithm based on recursive IS. This also  provides a general variance reduction method for univariate SR, which doesn't require prior knowledge of the change of measure.\\
The paper is organized as follows. Section \ref{2}, is dedicated to MSRM. In particular, we recall the main theorem characterizing the optimal allocations. The main results of consistency and asymptotic normality of the RM algorithm are presented in Section \ref{3},  along with the Polyak-Ruppert averaging and adaptative importance sampling. Finally, section \ref{4} is devoted to some numerical experiments of our procedures. We present a first testing example with an exponential loss function,  where we have a closed formula for optimal allocations. We also give a second example using a loss function with a mixture of positive part and quadratic functions.

\section{About Multivariate Risk Measures}
\label{2}
Let $(\Omega, \mathcal{F}, P)$ be a probability space, and denote by $L^0(\R^d)$ the space of $\mathcal{F}$-measurable $d$-variate random variables on this space with $d \ge 2$. For $x, y \in \R^d$, we say that that $x \ge y$ ( $x > y$ resp.) if $x_k \ge y_k$ ($x_k > y_k$ resp.) for every $1 \le k \le d$. We denote by $||\cdot||$ the Euclidean norm, and $x \cdot y = \sum x_k y_k$. For a function $f: \R^d \mapsto [-\infty, \infty]$, we denote by $f^*(y) = \sup_{x}\{x\cdot y - f(x)\}$ the convex conjugate of $f$.
The space $L^0(\R^d)$ inherits the lattice structure of $\R^d$ and therefore, we can use the classical notations in $\R^d$ in a $P$-almost-sure sens. We say, for example, for $X, Y \in L^0(\R^d)$, that $X\ge Y$ (or $X > Y$ resp.) if $P(X\ge Y)= 1$ (or $P(X > Y) = 1$ resp.). To simplify the notation, we will simply write $L^0$ instead of $L^0(\R^d)$. Now, let $X=(X_1,...,X_d)\in L^0$ be a random vector of financial losses, i.e., negative values of $X_k$ represents actually profits. We want to assess the systemic risk of the whole system and to determine a monetary risk measure, which will be denoted $R(X)$, as well as a risk allocation $RA_k(X), k=1,...,d$ among the $d$ risk components. Inspired by the univariate case introduced in \cite{follmer2002convex}, \cite{armenti} introduced a multivariate extension of shortfall risk measures by the means of loss functions and sets of acceptable monetary risk allocations.
\begin{Definition}
A function $l:\R^d \mapsto (-\infty, \infty]$ is called a loss function if:
\begin{enumerate}[start=1,label={(A\arabic*)}]
\item $l$ is increasing, that is $l(x) \ge l(y)$ if $x \ge y$;
\item $l$ is convex and lower-semicontinuous with $\inf l < 0$;
\item $l(x) \ge \sum x_k - c$ for some constant $c$.
\end{enumerate}
Furthermore, a loss function $l$ is said to be permutation invariant if $l(x) = l(\pi(x))$ for every permutation $\pi$ of its components.
\end{Definition}
\textbf{Comment}: The property (A1) expresses the normative fact about the risk, that is, the more losses we have, the riskier is our system. As for (A2), it expresses the desired property of diversification. Finally, (A3) says that the loss function put more weight on high losses than a risk neutral evaluation.
\begin{Example}
Let $h :\R \mapsto \R$ be one dimensional loss function satisfying condition (A1), (A2) and (A3). We could build a multivariate loss function using this one dimensional loss function in the following way:
\begin{enumerate}[start=1,label={(C\arabic*)}]
\item $l(x) = h(\sum x_k)$;
\item $l(x) = \sum h(x_k)$;
\item $l(x) = \alpha h(\sum x_k) + (1 - \alpha) \sum h(x_k)$ for $0 \le \alpha \le 1$.
\end{enumerate}
\end{Example}
More specifically, in (C1), we are aggregating losses before evaluating the risk, whereas in (C2), we evaluate individual risks before aggregating. The loss function in (C3) is a convex combination of those in (C1) and (C2).\\
One of the main examples we will be studying in this paper are the two following ones:
$$h(x) = \frac{1}{\alpha + 1}(\sum_i e^{\beta x_i} + \alpha e^{\beta \sum x_i}) - \frac{\alpha + d}{\alpha + 1},~~h(x)=\sum_i x_i + \frac{1}{2}\sum_i (x_i^+)^2 + \alpha \sum_{i < j} x_i^+x_j^+,$$
\noindent where the coefficient $\alpha > 0$ is called the systemic weight and $\beta > 0$ is a risk aversion coefficient.\\
In the following, we will consider multivariate risk measures defined on Orlicz spaces (see \cite{orlicz} for further details on the theory of Orlicz spaces). This has several advantages. From a mathematical point of view, it is a more general setting than $L^\infty$, and in the same time, it simplifies the analysis especially for utility maximization problems. Therefore, we will consider loss vectors in the following multivariate Orlicz heart:
$$M^\theta = \{ X\in L^0:~ E[\theta(\lambda X)] < \infty, ~ \forall	\lambda > 0\},$$
where $\theta(x) = l(|x|), x \in \R^d$. See Appendix in \cite{armenti} for more details about Orlicz spaces.\\
Next, we give the definition of multivariate shortfall risk measures as it was introduced in \cite{armenti}.
\begin{Definition}
Let $l$ be a multivariate loss function and $X\in M^\theta$, we define the acceptance set $\mathcal{A}(X)$ by:
$$\mathcal{A}(X) \vcentcolon= \{ m \in \R^d:~ E[l(X-m)] \le 0\}.$$
The multivariate shortfall risk of $X\in M^\theta$ is defined as:
\begin{equation}
\label{R}
R(X) \vcentcolon = \inf\left\{ \sum m_k:~m\in \mathcal{A}(X) \right\} = \inf\left\{ \sum m_k:~ E[l(X-m)] \le 0\right\}.
\end{equation}
\end{Definition}
\begin{Remark}
When $d=1$, the above definition corresponds exactly to the univariate shortfall risk measure in \cite{follmer}.
\end{Remark}
The following theorem from \cite{armenti} shows that the multivariate shortfall risk measure has the desired properties and admits a dual representation as in the case of univariate shortfall risk measure. We introduce $Q^{\theta^*}$ the set of measure densities in $L^{\theta^*}$, the dual space of $M^\theta$:
$$ Q^{\theta^*} \vcentcolon=\left\{ \frac{dQ}{d P}\vcentcolon= (Z_1,...,Z_d),~Z\in L^{\theta^*},~ Z_k\ge 0~\text{and}~E[Z_k]=1~\text{for every k}\right\}.$$
\begin{Theorem}{[Theorem 2.10 in \cite{armenti}]}
The function
$$R(X) \vcentcolon = \inf\left\{ \sum m_k:~m\in \mathcal{A}(X) \right\},$$
is real-valued, convex, monotone and translation invariant. Moreover, it admits the dual representation:
$$R(X) = \underset{Q \in Q^{\theta^*}}{\max} \{E_{Q}[X] - \alpha(Q)\}, ~~X \in M^\theta,$$
where the penalty function is given by
$$ \alpha(Q) = \underset{\lambda > 0}{\inf}~E\left[\lambda l^*\left(\frac{dQ}{\lambda d P}\right)\right],~~ Q\in Q^{\theta^*}.$$
\end{Theorem}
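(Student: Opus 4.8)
The plan is to treat the two assertions of the theorem separately: first I would verify the four structural properties by direct manipulation of the acceptance set $\mathcal{A}(X)$, and then establish the dual formula by combining an abstract robust representation with an explicit computation of the penalty via Lagrangian duality. For the structural part, finiteness is the only point requiring the hypotheses on $l$. The lower bound $R(X) > -\infty$ comes from (A3): any $m \in \mathcal{A}(X)$ satisfies $0 \ge E[l(X-m)] \ge \sum E[X_k] - \sum m_k - c$, so $\sum m_k \ge \sum E[X_k] - c$, where $E[X_k]$ is finite because $X \in M^\theta$ together with (A3) forces $X$ to be componentwise integrable. For $R(X) < +\infty$ I would show $\mathcal{A}(X)\neq\emptyset$ by taking $m = t\mathbf 1$ and letting $t\to\infty$: monotonicity of $l$ and $\inf l < 0$ give $l(X-t\mathbf 1)\downarrow \inf l < 0$ pointwise, so by dominated convergence $E[l(X-t\mathbf 1)]\to \inf l < 0$, which is negative for $t$ large. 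Monotonicity, translation invariance and convexity then follow from the matching property of $l$: if $X\le Y$ then $\mathcal{A}(Y)\subseteq\mathcal{A}(X)$, giving $R(X)\le R(Y)$; the identity $\mathcal{A}(X+m_0)=m_0+\mathcal{A}(X)$ gives $R(X+m_0)=R(X)+\mathbf 1\cdot m_0$; and convexity of $l$ shows $\lambda m+(1-\lambda)n\in\mathcal{A}(\lambda X+(1-\lambda)Y)$ whenever $m\in\mathcal A(X)$, $n\in\mathcal A(Y)$, whence convexity after taking infima.

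For the dual representation, since $R$ is now a finite, convex, monotone, translation-invariant functional on the Orlicz heart $M^\theta$, I would invoke the robust (Fenchel--Moreau) representation of convex risk measures on Orlicz spaces to write $R(X)=\sup_{Z}\{E[Z\cdot X]-R^{*}(Z)\}$ with $R^{*}(Z)=\sup_{X}\{E[Z\cdot X]-R(X)\}$. Monotonicity forces $Z\ge 0$ on the effective domain of $R^{*}$, and translation invariance forces $E[Z_k]=1$ for each $k$ (test $R^{*}$ against $X+te_k$ and let $t\to\pm\infty$); hence the dual variable ranges exactly over $Q^{\theta^{*}}$ and $E[Z\cdot X]=E_Q[X]$. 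Writing $\mathcal A=\{Y\in M^\theta: E[l(Y)]\le 0\}$ and using the equivalence $m\in\mathcal A(X)\iff X-m\in\mathcal A$, the substitution $Y=X-m$ together with $E_Q[m]=\sum m_k$ (which holds because $E[Z_k]=1$) collapses the penalty to $\alpha(Q)=R^{*}(Q)=\sup\{E_Q[Y]:E[l(Y)]\le 0\}$.

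The penalty is then computed by Lagrangian duality on this constrained problem. The Slater-type condition (existence of $Y$ with $E[l(Y)]<0$) is exactly the one produced by the finiteness step, so a minimax theorem permits exchanging the supremum over $Y$ and the infimum over the multiplier $\lambda\ge 0$:
\[
\alpha(Q)=\sup_{Y}\inf_{\lambda\ge 0}\{E_Q[Y]-\lambda E[l(Y)]\}
=\inf_{\lambda\ge 0}\sup_{Y}\{E[Z\cdot Y]-\lambda E[l(Y)]\}.
\]
The choice $\lambda=0$ yields $+\infty$, so the infimum is over $\lambda>0$. For fixed $\lambda>0$, the interchange of supremum and expectation (justified by the theory of normal integrands) gives the pointwise maximization
\[
\sup_{Y}E[Z\cdot Y-\lambda l(Y)]
=E\Big[\sup_{y}\{Z\cdot y-\lambda l(y)\}\Big]
=E\Big[\lambda\, l^{*}\Big(\tfrac{Z}{\lambda}\Big)\Big],
\]
which is precisely $\alpha(Q)=\inf_{\lambda>0}E[\lambda l^{*}(dQ/(\lambda dP))]$.

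Finally, to upgrade $\sup$ to $\max$, I would exploit that $R$ is finite on all of $M^\theta$: finiteness of a convex risk measure yields norm-continuity, whence $R^{*}$ has weak$^{*}$-compact sublevel sets (Banach--Alaoglu together with a Dunford--Pettis uniform-integrability argument in $L^{\theta^{*}}$), so the upper-semicontinuous concave map $Q\mapsto E_Q[X]-\alpha(Q)$ attains its supremum. I expect the main obstacles to be precisely the two interchange steps (the minimax swap in $\lambda$ and the passage $\sup_Y E[\cdots]=E[\sup_y\cdots]$) and, above all, the attainment of the maximum, since these require the correct topology on the Orlicz dual $L^{\theta^{*}}$ and a genuine compactness/uniform-integrability input; the algebraic verification of the four structural properties is routine by comparison.
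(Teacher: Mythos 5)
This statement is not proved in the paper at all: it is imported verbatim from \citet{armenti} (Theorem 2.10 there) and used as a black box, so there is no in-paper proof to compare your attempt against. Judged on its own merits, your outline is a sound reconstruction of the standard argument, and in fact of the one in the cited source: the four structural properties by direct manipulation of $\mathcal{A}(X)$, integrability of $X$ extracted from (A3), the robust representation from finiteness/continuity of convex monotone translation-invariant maps on the Orlicz heart $M^\theta$, the identification $\alpha(Q)=R^*(Q)=\sup\{E_Q[Y]:\,E[l(Y)]\le 0\}$ via translation invariance and $E[Z_k]=1$, and the penalty formula via Lagrangian duality (with the Slater point $-t\mathbf{1}$) combined with the Rockafellar-type interchange of supremum and expectation, which is legitimate here because $M^\theta$ is decomposable. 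Two small repairs are worth noting. First, the limit $E[l(X-t\mathbf{1})]\to\inf l$ should be justified by monotone rather than dominated convergence, since (A2) only gives $\inf l<0$ and allows $\inf l=-\infty$, in which case no integrable minorant dominates the sequence. Second, for attainment of the maximum, norm-continuity of $R$ at $X$ (extended Namioka--Klee on the Banach lattice $M^\theta$) already yields $\partial R(X)\neq\emptyset$, and any subgradient attains the supremum; this is simpler than your compactness route, which also works but needs only Banach--Alaoglu plus weak$^*$-upper-semicontinuity of $Z\mapsto E[Z\cdot X]-R^*(Z)$ on its (norm-bounded) superlevel sets --- the Dunford--Pettis/uniform-integrability ingredient you invoke is not needed.
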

Now, we address the question of existence and uniqueness of a risk allocation which are not straightforward in the multivariate case. \cite{armenti} showed that if the loss function is permutation invariant, then risk allocations exist and they are characterized by Kuhn-Tucker conditions. We denote by $Z = \{m \in \R^d, \sum m_i = 0\}$ the \textit{zero-sum allocations} set.
\begin{Definition}
A risk allocation is an acceptable monetary risk allocation $m\in \mathcal{A}(X)$ such that $R(X) = \sum m_k$. When a risk allocation is uniquely determined, we denote it by $RA(X)$.
\end{Definition}
\noindent We make the following assumption on the loss function $l$ and the vector of losses $X \in M^\theta$:
\begin{enumerate}[label=($\mathcal{A}$\textsubscript{l})]
\item
\begin{enumerate}[label = \roman*.]
\item\label{MainAssumption} For every $m_0$, $m \mapsto l(X-m)$ is  differentiable at $m_0$ a.s.;
\item $l$ is permutation invariant.
\end{enumerate}
\label{Al}
\end{enumerate}
\begin{Theorem}{[Theorem 3.4 in \cite{armenti}]}
\label{Main}
Let $l$ be a loss function and $X \in M^{\theta}$ such that assumption \ref{Al} holds. Then, risk allocations $m^* \in \mathbb{R}^d$ exists and they are characterized by the first order conditions:
$$1 = \lambda^* \mathbb{E}[\nabla l(X - m^*)],~~~\mathbb{E}[l(X-m^*)] = 0,$$
where $\lambda^* \ge 0$ is a Lagrange multiplier.
If moreover $l(x + \cdot)$ is strictly convex along zero sum allocations for every $x$ such that $l(x)\ge 0$, the risk allocation is unique.
\end{Theorem}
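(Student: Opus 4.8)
The plan is to read a risk allocation as a solution of the convex program
$$\min_{m\in\R^d}\ \textstyle\sum_k m_k \quad\text{subject to}\quad F(m)\vcentcolon= E[l(X-m)]\le 0,$$
and to establish, in order: (i) that the feasible set $\mathcal{A}(X)$ is nonempty, closed and convex; (ii) that the infimum is attained; (iii) the first order conditions via Lagrangian duality; and (iv) uniqueness. For (i), $F$ is convex because $l$ is convex and $m\mapsto X-m$ is affine, and $F$ is finite on $\R^d$ since $X\in M^\theta$; lower semicontinuity of $l$ together with Fatou's lemma makes $\mathcal{A}(X)=\{F\le 0\}$ closed and convex, and it is nonempty because $R(X)$ is real-valued by the dual representation theorem. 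Writing $m=z+t\mathbf{1}$ with $z\in Z$ and $t\in\R$, the objective equals $dt$, so the whole difficulty is concentrated in the zero-sum directions $Z$, along which the objective is constant.

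The hard part is attainment, since the objective $\sum_k m_k$ is not coercive along $Z$. I would analyse a minimizing sequence $m^n=z^n+t_n\mathbf{1}$ with $t_n\to R(X)/d$ and $F(m^n)\le 0$. If $(z^n)$ is bounded, closedness of $\mathcal{A}(X)$ delivers a minimizer, so the only threat is $\|z^n\|\to\infty$ along a unit direction $w^n\to w\in Z$. Feasibility forces the recession (asymptotic) function $l^\infty$ of $l$ to satisfy $E[l^\infty(-w)]\le 0$, while the growth bound (A3) gives $l^\infty(-w)\ge \sum_k(-w_k)=0$ a.s. (as $w\in Z$); hence $l^\infty(-w)=0$ a.s. Here permutation invariance enters decisively: $l^\infty$ is itself permutation invariant, so $l^\infty(-\pi w)=0$ a.s. for every permutation $\pi$, and since the orbit $\{-\pi w\}$ has centroid $0$, the point $0$ lies in the relative interior of its convex hull; consequently $w$ belongs to the convex cone generated by $\{-\pi w\}$, and sublinearity of $l^\infty$ yields $l^\infty(w)\le 0$, hence $l^\infty(w)=0$ by (A3) again. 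Having $l^\infty(w)=l^\infty(-w)=0$ means $l$ is constant along $\mathbb{R}w$, so $w$ lies in the lineality space of the problem; quotienting out this finite-dimensional space leaves a coercive problem whose minimizer lifts to the desired $m^*$. I expect this recession-plus-orbit argument to be the crux, and the place where permutation invariance is genuinely needed, since without it cash can escape to infinity along an asymmetric zero-sum direction and no allocation exists.

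For the first order conditions I would invoke convex Lagrangian duality. Strict feasibility (Slater's condition) holds: by monotonicity and $\inf l<0$, sending $m=t\mathbf{1}$ with $t\to\infty$ drives $E[l(X-t\mathbf{1})]$ strictly below $0$, using dominated/monotone convergence and $X\in M^\theta$. Strong duality then provides a multiplier $\lambda^*\ge 0$ for which $m^*$ minimizes the Lagrangian $\mathcal{L}(m,\lambda^*)=\sum_k m_k+\lambda^* E[l(X-m)]$ with complementary slackness. The differentiability in \ref{MainAssumption} makes $m\mapsto l(X-m)$ differentiable a.s.\ at $m^*$; since the partial difference quotients are monotone by convexity and dominated in $M^\theta$, one differentiates under the expectation to get $\nabla_m F(m^*)=-E[\nabla l(X-m^*)]$. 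Stationarity then reads $\mathbf{1}=\lambda^* E[\nabla l(X-m^*)]$, which forces $\lambda^*>0$ (otherwise $\mathbf{1}=0$). Finally the constraint must be active: if $E[l(X-m^*)]<0$, then decreasing one coordinate of $m^*$ slightly keeps $F\le 0$ by continuity while strictly lowering $\sum_k m_k$, contradicting optimality; hence $E[l(X-m^*)]=0$.

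For uniqueness under strict convexity of $l(x+\cdot)$ along $Z$ for every $x$ with $l(x)\ge 0$, suppose $m^1\ne m^2$ are both risk allocations. They share the optimal value, so $m^1-m^2\in Z$, and both satisfy $E[l(X-m^i)]=0$ by the previous step. The midpoint $m^{1/2}$ is feasible with the same objective value, hence optimal, and $X-m^{1/2}$ lies on the zero-sum segment joining $X-m^1$ and $X-m^2$. Since $E[l(X-m^1)]=0$, the event $\{l(X-m^1)\ge 0\}$ has positive probability, and on it strict convexity along $Z$ gives a strict inequality, so $E[l(X-m^{1/2})]<\tfrac12 E[l(X-m^1)]+\tfrac12 E[l(X-m^2)]=0$. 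But then $m^{1/2}$ is strictly feasible with optimal objective, and the coordinate-decrease argument above produces a feasible point of strictly smaller cost, contradicting optimality. Therefore $m^1=m^2$.
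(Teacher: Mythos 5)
There is nothing in the paper to compare your argument against line by line: the paper does not prove this statement at all. It is imported verbatim as Theorem 3.4 of \citet{armenti}, and the only related material in the paper is the comment following the theorem, which takes existence and the first order conditions as given and merely rewrites them --- via Theorem 7.46 in \citet{shapiro} (differentiation under the expectation) and Theorem 28.3 in \citet{rockafellarConvex} (equivalence with a saddle point of the Lagrangian \eqref{lagrangian}) --- as the root-finding problem $h(z^*)=0$ that the stochastic algorithms then solve. Measured against the original proof in \citet{armenti}, your proposal is essentially a faithful reconstruction of its strategy: Slater's condition plus Lagrangian duality for the first order conditions, strict convexity along $Z$ for uniqueness, and, for the genuinely hard existence step, a recession analysis in which permutation invariance is exactly what prevents a minimizing sequence from escaping to infinity along a zero-sum direction. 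Your orbit argument is correct, and can even be shortened: since $\sum_{\pi}\pi w=0$ for $w\in Z$, one has $w=\sum_{\pi\neq\mathrm{id}}(-\pi w)$ directly, so sublinearity of $l^\infty$ gives $l^\infty(w)\le 0$ at once, with no need for the relative-interior/conic-hull detour.

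Two small repairs are needed, neither fatal. First, ``characterized by'' is an equivalence, and you prove only the forward implication (risk allocation $\Rightarrow$ first order conditions); the converse --- that any $(m^*,\lambda^*)$ satisfying the conditions is a risk allocation --- is one line by convexity: for any feasible $m$, $\sum_k m_k \ge \sum_k m_k+\lambda^* E[l(X-m)] = L(m,\lambda^*)\ge L(m^*,\lambda^*)=\sum_k m_k^*$, the middle inequality because $m^*$ is a stationary point of the convex function $L(\cdot,\lambda^*)$ and the last equality because $E[l(X-m^*)]=0$; this should be stated. Second, the two interchanges you gloss over --- that the recession function of $m\mapsto E[l(X-m)]$ is $v\mapsto l^\infty(-v)$, and that $\nabla_m E[l(X-m)]=-E[\nabla l(X-m)]$ --- both follow from monotone convergence applied to the nondecreasing convex difference quotients (the latter is precisely the cited Theorem 7.46 in \citet{shapiro}); relatedly, the ``a.s.''\ you attach to $l^\infty(-w)=0$ is vacuous, since $l^\infty(-w)$ is a deterministic number, not a random variable. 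With these additions your proof is complete and, in substance, the same as the one the paper delegates to \citet{armenti}.
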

\textbf{Comment}: Let $f_0(m) = \sum_{i=1}^d m_i$ and $f_1(m) \vcentcolon = E[l(X-m)]$, for $m \in \R^d$ and $X \in M^\theta$. The assumption \ref{Al}-\ref{MainAssumption} together with the convexity of the function $m \mapsto l(X-m)$, we have that, by Theorem 7.46 in \cite{shapiro}, $f_1$ is differentiable at every $m\in \R^d$ and that,
$$\nabla f_1(m) = -E[\nabla l(X-m)],~m \in \R^d.$$
Therefore, the first order conditions given in the above theorem are equivalent to :
$$\left\{
\begin{aligned}
&\nabla f_0(m^*) + \lambda^* \nabla f_1(m^*) = 0,\\
& f_1(m^*) = 0.
\end{aligned}\right.$$
Furthermore, we also know, thanks to Theorem 28.3 in \cite{rockafellarConvex}, that the above conditions are equivalent to saying that $(m^*, \lambda^*)$ is a saddle point of the Lagrangian associated to the problem in \eqref{R}, 
\begin{equation}\label{lagrangian}
L(m, \lambda) \vcentcolon = f_0(m) + \lambda f_1(m)=\sum_{i=1}^d m_i + \lambda E[l(X-m)].
\end{equation}
Under the assumptions of the above theorem  $z^* \vcentcolon= (m^*, \lambda^*)$ is the unique solution of $h(z) = 0$, where:
\begin{equation}
\label{defh}
h(z) \vcentcolon=\left( \begin{aligned}
\lambda E[&\nabla l(X-m)] - 1\\
&E[l(X-m)]
\end{aligned}\right), ~z = (m, \lambda) \in \R^d \times [0, \infty[. 
\end{equation}

Thus, in order to find the unique risk allocation $m^*$, we can look for the zeros of the function $h$. We introduce in the section a stochastic algorithm in order to approximate $z^*$, with the advantage of being incremental, less sensitive to dimension, and  with offer a flexible framework that can be conveniently combined with features such as importance sampling, as discussed in Section \ref{SectionIS}. 
%(see \cite{stochastic})and model uncertainty. }
\section{Multivariate Systemic Risk Measures and Stochastic Algorithms}
\label{3}
Let $l$ be a loss function   and a vector of losses $ X \in M^\theta$. In the remainder of the paper, we assume that the hypotheses  of Theorem \ref{Main} are verified by $l$, that is:
\begin{enumerate}[label=($\mathcal{A}^\prime$\textsubscript{l})]
\item
\begin{enumerate}[label = \roman*.]
\item For every $m_0$, $m \mapsto l(X-m)$ is  differentiable at $m_0$ a.s.;
\item $l$ is permutation invariant;
\item $m \mapsto E[l(X-m)]$ is strictly convex.
%\item $l$ is strictly convex along zero sum allocations.
\end{enumerate} 
\label{Al'}
\end{enumerate}
Under \ref{Al'}, Theorem \ref{Main} ensures that there exists a unique risk allocation $m^*$ such that $z^*=(m^*, \lambda^*)$ is the unique root of the function $h(z) \vcentcolon= E[H(X, z)]$, where we set
\begin{equation}
\label{DefH}
H(X, z) = \begin{pmatrix}
\lambda \nabla_m l(X - m) - 1\\
l(X - m)
\end{pmatrix},~ X \in M^\theta.
\end{equation} 
Note that the assumptions ($\mathcal{A}^\prime$\textsubscript{l}) (i) and (iii) allow us to consider  loss functions of interest, which are not necessarily differentiable and/or strictly convex, such as the second example in  Section \ref{SecondExample}.

\paragraph{} The aim of this section is to construct a stochastic  algorithm that converges to the root $z^*=(m^*, \lambda^*)$ under some suitable assumptions. 
The standard Robbins-Monro (RM) procedure approximates the function  $h(z) = E[H(X, z)]$ at the $n+1$th step by  $Z_{n+1} = (m_{n+1}, \lambda_{n+1}) = Z_n + \gamma_n H(X_{n+1}, Z_n)$, with $X_{n+1}$ a random variable distributed as $X$. 
In order to obtain a.s. convergence of the algorithm toward the unique root $z^*$, one crucial condition  is the sublinearity of $h$, which is very constraining and does not allow us to define a  framework general enough to cover a wide range of loss functions. Consequently,  we propose a constrained version of the RM algorithm. 
The constrained algorithm requires the sequence $(Z_n)_{n \ge 1}$ to remain in a compact set $K$, for non-explosion reasons.  Each time $Z_n$ goes out of $K$, it  is  replaced by its projection on $K$ leading to the following projected RM algorithm:
\begin{equation}\label{RM2}
\begin{aligned}
Z_{n + 1} &= \Pi_K[Z_n + \gamma_n H(X_{n+1}, Z_n)],~ Z_0 = z_0 \in K\\
&= \Pi_K[Z_n + \gamma_n h(Z_n) +\gamma_n \delta M_n],
\end{aligned}
\end{equation}
where $\delta M_n = H(X_{n+1}, Z_n)-h(Z_n)$, and  $(X_n)_{n \ge 1}$ is an i.i.d sequence of random variables with the same distribution as $X$, independent of $Z_0$. An important point is that  the Algorithm \eqref{RM2} approximates simultaneously the optimal allocation $m ^*$ and the Kuhn-Tucker coefficient $\lambda^*$, which differs from the univariate case  of \cite{stochastic} where an algorithm which approximates $m^*$ directly can be implemented. We refer to Chapter 4 in \cite{kushner} for a general introduction on constrained stochastic algorithms.\\

 In the sequel, we denote $\mathcal{F}_n = \sigma(Z_0, X_i, i \le n)$. Note that $\delta M_n$ is a martingale difference sequence with respect to the filtration $\mathcal{F}=(\mathcal{F}_n)$. 
$(\gamma_n)_{n \ge 1}$ is assumed to be  a deterministic step sequence decreasing to zero and satisfying:
\begin{equation}\label{gamma}
\sum_{n \ge 1} \gamma_n = +\infty~\text{and}~\sum_{n \ge 1} \gamma_{n}^2 < + \infty.
\end{equation}
In the following, we will take  $\gamma_n = \frac{c}{n^\gamma}$  where $c$ is a positive constant and $\gamma \in ]\frac{1}{2}, 1]$.

Finally,  $K $ is an hyperrectangle such that $z^*$ is in the interior of $K$: $K=\{m \in \R^d, a_i \le m_i \le b_i\}\times [0, A]$. 
 This assumption can be difficult to verify in specific examples. However, the projection on the compact set $K$ can be replaced by an adaptive projection procedure, the so-called projection \q{à la Chen}, introduced in \cite{Chen1986STOCHASTICAP}, \cite{chen1987convergence}.  Given an increasing sequence of compact sets $(K_n)_n$ that covers $\R^d$,  each time  $Z_n$ leaves the current compact set $K_{\sigma(n)}$,  the compact set is replaced by $K_{\sigma(n) + 1}$ before the next step of the algorithm: 
\begin{equation*}\label{RM2}
\begin{aligned}
& Z_{n + 1}  = \Pi_{K_{\sigma(n)}}[Z_n + \gamma_n H(X_{n+1}, Z_n)],~ Z_0 = z_0 \in K_0, \\
&K_{\sigma(n+1)} = \begin{cases}
  K_{\sigma(n)}  \text{ if }  Z_n \in   K_{\sigma(n)},  \\
   K_{\sigma(n)+1}  \text{ if }  Z_n \notin   K_{\sigma(n)}.
    \end{cases}
\end{aligned}
\end{equation*}
 \cite{lelong} proved Central Limit Theorems for this type of projection, that are similar to the ones with the projection over one single compact set (see also \cite{andrieu2005stability}). For the sake of simplicity, we will use here the classical projection over a fixed compact set $K$.

\subsection{Main Results}

We start by stating the two main results regarding the consistency and asymptotic normality of  Algorithm \eqref{RM2}. The proofs of Theorem \ref{asTheo} and \ref{aNorm} can be found in Section \ref{SectionTh1} and \ref{SectionProofTh2}.    \\
 Let $\sigma^2(\cdot)$, $\Sigma(\cdot)$ and  $m^{2+p}(\cdot)$, for $p > 0$, be defined as follows:
\begin{equation*}
\left\{
\begin{aligned}
\sigma^2(z) &= E[||H(X,z)-h(z)||^2];\\
m^{2+p}(z) &= E[||H(X,z)-h(z)||^{2+p}];\\
\Sigma(z) &= E[(H(X,z)-h(z))(H(X,z)- h(z))^\intercal].
\end{aligned}\right.
\end{equation*}
Recall that Assumption \ref{Al'} ensures the existence of a unique risk allocation $z^* =(m ^*, \lambda^*)$. We introduce the  following classical assumptions for stochastic algorithms  in order to obtain the almost sure convergence of $(Z_n)_n$ to $z^*$:
\begin{center}
\begin{minipage}{7cm}
\begin{enumerate}[label=($\mathcal{A}$\textsubscript{a.s.})]
\item 
\begin{enumerate}[label=\roman*.]
\label{Aas}
\item\label{Aas1} $h$ is continuous on $K$;
\item\label{Aas2} $\underset{z\in K}{\sup}~\sigma^2(z) < \infty$.
\end{enumerate}
\end{enumerate}
\end{minipage}
\end{center}
\begin{Theorem}[Almost sure convergence]
\label{asTheo}
Assume that the sequence $(Z_n)$ is defined by the  algorithm \eqref{RM2} and that assumptions \ref{Al'} and \ref{Aas} hold. Then, $Z_n \rightarrow z^*$ $P$- almost surely as $n \rightarrow \infty$.
\end{Theorem}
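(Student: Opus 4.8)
The plan is to invoke the ODE method for projected stochastic approximation, for which the stability analysis of the two preceding propositions has already done the essential work. The overall strategy rests on three ingredients: boundedness of the iterates, control of the stochastic noise, and identification of the limit set of the associated projected ODE \eqref{ODE} with the singleton $\{z^*\}$. First I would note that boundedness comes for free: since $K$ is compact and $Z_{n+1} = \Pi_K[\,\cdots]$ by construction, the whole sequence $(Z_n)$ stays in $K$ almost surely, so there is no explosion to rule out. This is precisely why the projection was introduced in place of a plain Robbins--Monro scheme.

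Next I would control the martingale noise. Recall $\delta M_n = H(X_{n+1},Z_n) - h(Z_n)$, which is a martingale difference for $(\mathcal{F}_n)$ since $(X_n)$ is i.i.d.\ and $Z_n$ is $\mathcal{F}_n$-measurable. Because $Z_n \in K$ and, by assumption \ref{Aas2}, $\sup_{z\in K}\sigma^2(z) < \infty$, we obtain $E[\|\delta M_n\|^2 \mid \mathcal{F}_n] = \sigma^2(Z_n) \le \sup_{z\in K}\sigma^2(z) < \infty$. Hence the martingale $N_n \vcentcolon= \sum_{k=1}^n \gamma_k \delta M_k$ has orthogonal increments with $E[\|N_n\|^2] = \sum_{k=1}^n \gamma_k^2\, E[\|\delta M_k\|^2] \le \sup_{z\in K}\sigma^2(z)\sum_{k\ge 1}\gamma_k^2 < \infty$ by \eqref{gamma}, so it is bounded in $L^2$ and converges almost surely. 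This is exactly the noise condition guaranteeing that, asymptotically, the rescaled noise increments contribute nothing to the interpolated dynamics.

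Then I would assemble these into the ODE method. Using the piecewise-constant (or piecewise-linear) time interpolation of $(Z_n)$ with steps $\gamma_n$, the continuity of $h$ on $K$ (assumption \ref{Aas1}), the compactness of $K$, and the almost sure convergence of $N_n$, one shows that the interpolated process tracks the solutions of the projected ODE \eqref{ODE}, and that its limit set is a nonempty invariant set of that ODE contained in $K$. By Proposition \ref{negDef} the Lyapunov function $V(z)=\|z-z^*\|^2$ satisfies $\dot V \le 0$ on $K$, and by Proposition \ref{asymptotStableProp} (via the invariance principle, Corollary \ref{corInv} with $\Omega = K$) the only invariant set inside $\{z\in K :\ \dot V(z)=0\}$ is $\{z^*\}$, with every trajectory originating in $K$ converging to $z^*$. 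Consequently $K$ lies entirely in the domain of attraction of the unique asymptotically stable equilibrium $z^*$, the limit set of the interpolation must collapse to $\{z^*\}$, and therefore $Z_n \to z^*$ almost surely.

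The hard part will be the interface between the discrete algorithm and the continuous projected ODE: one must justify the equicontinuity and relative compactness of the interpolated family, and treat the reflection term $C(z)\in -\mathcal{C}(z)$ correctly in the limit, so that the limiting trajectories genuinely solve \eqref{ODE} rather than some relaxation. This is standard but delicate, and I would lean on the projected-ODE convergence results recalled in Section \ref{6} (Theorem \ref{Inv} and Corollary \ref{corInv}) to package it, rather than reproving the whole Kushner--Clark apparatus. A secondary point worth emphasizing is that the conclusion of Proposition \ref{asymptotStableProp} is global on $K$, not merely local: it is precisely this globality, obtained through the invariance principle applied on all of $K$, that lets the a priori-only-invariant limit set collapse to the single point $z^*$.
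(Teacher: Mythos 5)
Your proposal is correct and follows essentially the same route as the paper: the paper likewise invokes the projected-ODE convergence theorem (Theorem 5.2.1 of Kushner and Yin), verifying its noise and continuity conditions from assumption \ref{Aas} and the step conditions \eqref{gamma}, and then identifies the limit with $z^*$ using the asymptotic stability established in Proposition \ref{asymptotStableProp}. Your explicit martingale $L^2$-boundedness argument and the remark that projection makes boundedness automatic are just the unpacked verification of the conditions the paper checks by citation.
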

We introduce the following assumptions for the central limit theorem:
\begin{center}
\begin{enumerate}[label=($\mathcal{A}$\textsubscript{a.n.})]
\item
\begin{enumerate}[label = \roman*.]
\label{Aan}
\item\label{Aan1} $m \mapsto E[\nabla l(X-m)]$ is continuously differentiable. Let $A \vcentcolon= Dh(z^*)$ (Jacobian matrix of $h$ at $z^*$);
\item\label{Aan2} $(H(X_{n+1}, Z_n) \mathbf{1}_{|Z_n - z^*| \le \rho})$ is uniformly integrable for small $\rho > 0$;
\item\label{Aan3} For some $p > 0$ and $\rho > 0, \underset{|z-z^*| \le \rho}{\sup} m^{2+p}(z) < \infty$;
\item\label{Aan4} $\Sigma(\cdot)$ is continuous at $z^*$. Let  $\Sigma^*\vcentcolon= \Sigma(z^*)$ .
\end{enumerate}
\end{enumerate}
\end{center}

\begin{Remark}
\begin{enumerate}
\item Assumption ($\mathcal{A}$\textsubscript{a.s.}) (i) is satisfied for continuously differentiable loss functions $l$,  by the Lebesgue dominating convergence theorem.  In the case of  univariate SR (see e.g. \cite{stochastic}), the first assumption is also trivial since $h$ only depends on the continuous loss function $l$ and not on $\nabla l$. 
\item In the particular case when  $X$ is bounded, the assumptions ($\mathcal{A}$\textsubscript{a.n.})(ii), (iii) and (iv) are straightforward. 
\end{enumerate}
\end{Remark}

\begin{Theorem}[Asymptotic Normality]
\label{aNorm}
Recall that  the step sequence $(\gamma_n)_{n\geq 0 }$ in \eqref{RM2}  is defined by $\gamma_n = \dfrac{c}{n^{\gamma}}$. Assume that $\gamma \in (\frac{1}{2}, 1)$ and that assumptions \ref{Al'}, \ref{Aas} and \ref{Aan} hold. Then,
$$\sqrt{n^\gamma}(Z_n - z^*) \to \mathcal{N}\left(0, c^2\int_0^\infty e^{cA t}~\Sigma^*~e^{cA^\intercal t}dt\right).$$
If furthermore, $cA + \frac{I}{2}$ is a Hurwitz matrix and $cI - P$ is positive definite with $P$ solution to the Lyapunov's equation: $A^\intercal P + PA = -I$, then for $\gamma =1$, 
$$\sqrt{n}(Z_n - z^*) \to \mathcal{N}\left(0, c^2\int_0^\infty e^{(cA+\frac{I}{2})t}~\Sigma^*~e^{(cA^\intercal+\frac{I}{2})t}dt\right).$$
\end{Theorem}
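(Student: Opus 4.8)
The plan is to use the almost sure convergence $Z_n\to z^*$ from Theorem~\ref{asTheo}, together with the fact that $z^*$ lies in the interior of the hyperrectangle $K$, to reduce the reflected recursion \eqref{RM2} to an ordinary, linearised Robbins--Monro scheme, and then to invoke the classical central limit theorem for stochastic approximation (in the spirit of \citet{pages}; see \citet{duflo}, \citet{benveniste} and \citet{kushner}) after checking its hypotheses. Since $Z_n\to z^*\in\interior K$ a.s. and $\gamma_n\to 0$, the reflection term is asymptotically inactive, i.e. $C(Z_n)=0$ for all large $n$ almost surely, so from some random index on the iterates obey $Z_{n+1}=Z_n+\gamma_n h(Z_n)+\gamma_n\delta M_n$. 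Writing $U_n:=Z_n-z^*$ and linearising with \ref{Aan1}, one has $h(Z_n)=A\,U_n+o(\|U_n\|)$ with $A=Dh(z^*)$, so that the asymptotics of $U_n$ are governed by the matrix $A$ and by the martingale increments $\delta M_n$, whose conditional covariance converges to $\Sigma^*$ by \ref{Aan4}.

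The crucial step, and the one I expect to be the main obstacle, is to prove that $A$ is a Hurwitz matrix, since this is precisely what makes the integrals in the statement convergent and the limiting Gaussians well defined. This does \emph{not} follow from the asymptotic stability of $z^*$ established in Proposition~\ref{asymptotStableProp}: that was obtained through LaSalle's invariance principle with a merely negative \emph{semi}-definite $\dot V$, and nonlinear asymptotic stability does not imply a Hurwitz linearisation. Instead I would compute $A$ from the primal--dual (Kuhn--Tucker) structure of $h$. Using the first order condition $\mathbf{1}=\lambda^*E[\nabla l(X-m^*)]$, which also forces $\lambda^*>0$, one finds
\begin{equation*}
A=\begin{pmatrix} -\lambda^* B & \tfrac{1}{\lambda^*}\mathbf{1} \\ -\tfrac{1}{\lambda^*}\mathbf{1}^\intercal & 0 \end{pmatrix},
\end{equation*}
where $B$ is the Hessian of the convex, $C^2$ map $m\mapsto E[l(X-m)]$ at $m^*$, positive definite by the strict convexity in \ref{Al'}. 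For an eigenpair $(\mu,w)$ with $w=(u,s)$, the off-diagonal blocks are negatives of one another's transpose, so their contributions to $w^*Aw$ cancel up to a purely imaginary term and
\begin{equation*}
\mathrm{Re}(\mu)\,\|w\|^2=\mathrm{Re}(w^*Aw)=-\lambda^*\,u^*Bu\le 0.
\end{equation*}
Equality forces $u=0$, whence $s\,\mathbf{1}=0$ and hence $w=0$; thus every eigenvalue has strictly negative real part and $A$ is Hurwitz. This is the only point where the special geometry of the problem, rather than generic stochastic-approximation bookkeeping, is essential.

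With $A$ Hurwitz, it remains to verify the regularity and noise hypotheses of the CLT and to read off the covariance. The continuity of $h$ (\ref{Aas1}), the uniform $L^2$ bound \ref{Aas2}, the local uniform integrability \ref{Aan2}, the local $(2+p)$-moment bound \ref{Aan3} (which provides a Lindeberg-type condition) and the continuity of $\Sigma(\cdot)$ at $z^*$ (\ref{Aan4}) are exactly the assumptions required. When $\gamma\in(\tfrac12,1)$ one has $1/\gamma_{n+1}-1/\gamma_n\to 0$, and the normalised error $\sqrt{n^\gamma}\,U_n$ is asymptotically Gaussian with the invariant covariance of the Ornstein--Uhlenbeck dynamics $dX_t=cA\,X_t\,dt+c\,(\Sigma^*)^{1/2}\,dW_t$, namely the unique solution $\Sigma$ of the Lyapunov equation $cA\,\Sigma+\Sigma\,cA^\intercal=-c^2\Sigma^*$, which is precisely $c^2\int_0^\infty e^{cAt}\Sigma^*e^{cA^\intercal t}\,dt$.

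The faster rate corresponds to the critical step $\gamma=1$, i.e. $\gamma_n=c/n$, for which $1/\gamma_{n+1}-1/\gamma_n\to 1/c$. Rescaling $U_n$ by $\sqrt{n}$ then multiplies the one-step drift by an extra factor $(1+\tfrac{1}{2n})$, so the effective drift matrix becomes $cA+\tfrac12 I$ in place of $cA$; stability of the rescaled recursion therefore requires $cA+\tfrac12 I$ to be Hurwitz, which is no longer automatic and must be imposed. The companion condition $cI-P\succ 0$, with $P$ solving $A^\intercal P+PA=-I$, is exactly the clean certificate for this, since
\begin{equation*}
\bigl(cA+\tfrac12 I\bigr)^\intercal P+P\bigl(cA+\tfrac12 I\bigr)=c\bigl(A^\intercal P+PA\bigr)+P=P-cI\prec 0,
\end{equation*}
so that $U^\intercal P\,U$ is a strict Lyapunov function for the scaled drift and yields the $L^2$-boundedness (tightness) of $\sqrt{n}\,U_n$ needed to pass to the limit. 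Repeating the invariant-law computation with $cA$ replaced by $cA+\tfrac12 I$ then gives the covariance $c^2\int_0^\infty e^{(cA+I/2)t}\Sigma^*e^{(cA^\intercal+I/2)t}\,dt$, which completes the proof.
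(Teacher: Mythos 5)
Your proposal is correct and reaches the theorem along the same broad lines as the paper (local linearisation of $h$ at $z^*$, a Hurwitz check on $A$ as the crux, identification of the limit covariance as the stationary covariance of an Ornstein--Uhlenbeck dynamics), but it differs in two substantive ways. First, the paper never reduces to an unconstrained recursion: it applies Theorem 10.2.1 of \citet{kushner}, a CLT built for \emph{projected} algorithms, and simply verifies its conditions (A2.0)--(A2.7); your route --- arguing that $C(Z_n)=0$ eventually because $Z_n\to z^*\in\interior(K)$ a.s., and then invoking a classical unconstrained CLT --- is morally equivalent but leaves a localization issue: the hypotheses of an unconstrained CLT concern the whole trajectory, and applying it only from a random index onward (and even the claim that $\gamma_nH(X_{n+1},Z_n)\to 0$ a.s., which needs $\sum_n\gamma_n^2\sigma^2(Z_n)<\infty$) requires bookkeeping that the citation to the constrained theorem buys you for free. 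Second, and this is where your argument adds value, the paper disposes of the key condition (A2.6) (that $A$ is Hurwitz) by citing Theorem 3.6 in \citet{saddle}, whereas you prove it directly: your eigenpair computation $\mathrm{Re}(\mu)\,\|w\|^2=\mathrm{Re}(w^*Aw)=-\lambda^*u^*Bu\le 0$, with equality forcing $u=0$, then $s\mathbf{1}=0$, then $w=0$, is correct (the off-diagonal blocks form the skew-symmetric part of $A$, so they contribute only a purely imaginary term) and gives a self-contained, elementary replacement for the external reference. Your treatment of $\gamma=1$ also matches the paper in substance: the paper routes the hypothesis ``$cA+\frac{I}{2}$ Hurwitz'' through the extra condition in (A2.6) and ``$cI-P$ positive definite'' through condition (A4.5) of Theorem 10.4.1 (tightness of the normalized iterates), while you exhibit the Lyapunov identity $(cA+\tfrac12 I)^\intercal P+P(cA+\tfrac12 I)=P-cI$, which certifies both points at once. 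One caveat you share with the paper: both arguments infer that the Hessian $B$ (the paper's $\hat A$) is positive definite from the strict convexity of $m\mapsto E[l(X-m)]$, although strict convexity only yields positive semi-definiteness in general; this common leap is needed for your ``equality forces $u=0$'' step exactly as it is needed for the paper's appeal to \citet{saddle}, so it is not a defect specific to your proof.
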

\begin{Remark}\
\begin{enumerate} 
\item Note that, for convex optimization problems, where the matrix $A$ is symmetric negative definite, the two additional conditions reduce to the classical condition $cA + \frac{I}{2}$ is negative definite. Indeed, in this case, the solution of the Lyapunov's equation $A^\intercal P + PA = -I$ is simply $P = -A^{-1}/2$ and the condition $cI-P$ is positive definite, becomes equivalent to $cA + \frac{I}{2}$ is negative definite.
\item From a formal point of view, the choice $\gamma = 1$ gives the best rate of convergence. The asymptotic variance in this case depends on the constant $c$. We need to choose it such that $cA + \frac{I}{2}$ is a Hurwitz matrix and $cI - P$ is positive definite. Setting $c$ too small may lead to no convergence at all, while setting it too large, may lead to slower convergence as the effects of large noises early in the procedure might be hard to overcome in a reasonable period of time. 
\item  The learning rate $c$ in $\gamma_n = \frac{c}{n}$ can be replaced by a conditioning nonsingular matrix $\Gamma$. In this case,  the results of Theorem \ref{aNorm} still hold, where  $\gamma_n$ is replaced by a matrix $\Gamma_n= \Gamma/n$, and  with the modified asymptotic covariance matrix:
$$ \int_0^\infty e^{(\Gamma A+\frac{I}{2})t}~\Gamma\Sigma^*\Gamma^\intercal~e^{( A^\intercal \Gamma^\intercal+\frac{I}{2})t}dt.$$
In particular, if  $\Gamma A $ is  a  negative definite matrix,   then,  by the previous remark, the hypothesis  of Theorem \ref{aNorm}  are verified as soon as $\Gamma A + \frac{I}{2}$s a Hurwitz matrix. Actually,  the optimal choice of  conditioning matrix $\Gamma$, which is also called the gain matrix, is the one that will minimize the trace of asymptotic covariance.  This is done by taking $\Gamma = -A^{-1}$ which yields the asymptomatic optimal covariance: $A^{-1}\Sigma^* (A^{-1})^\intercal$.

However, the optimal choice of $\Gamma$ depends on the function $h$ and the equilibrium point $z^*$ which are unknown to us. Adaptive procedures that choose the matrix $\Gamma$ dynamically by estimating $Dh(z^*)$ adaptively have been suggested in the literature (see for example \cite{ruppert}), but are generally not as efficient as the Polyak-Ruppert averaging estimators discussed in Section \ref{sectionPR}.
\end{enumerate}
\end{Remark}
\subsection{Proof of Theorem \ref{asTheo}}
\label{SectionTh1}
The proof of the almost sure convergence of $(Z_n)$ to $z^*$ rely on the so-called ODE method for the study of constrained RM algorithms.  Introduced by Ljung and Kushner, the main idea of the method consists in replacing the analysis of the sequence $(Z_n)$ by the analysis of continuous-time interpolations of the sequence of stochastic approximations. As detailed in  Theorem \ref{ThKusher1} of Appendix \ref{AppendixKushner}, these interpolations converge under suitable assumptions to a solution of the  projected ODE defined by : 
\begin{equation}\label{ODE}
\dot{z} = h(z) + C(z), ~C(z) \in -\mathcal{C}(z),
\end{equation}
where $\mathcal{C}(z)$ is the convex cone determined by the outer normals to the faces that need to be truncated at $z$, and $C(z)$ is the minimum force needed to bring back $z$ to $K$. In particular, $C(z(t))\neq  0$ only for $z(t) \in \partial K$. When $z(t) \in \partial K$ with $h(z(t))$ directed outside of $K$, then $C(z(t))$ take the ``smallest" value in $- \mathcal C(z)$ to maintain $z(\cdot)$ in $K$.
More precisely, for $z\in \R^{d+1}$ and $1\leq i \leq d$:
\begin{equation*}
C_i(z) = -h_i(z_i)\big( \mathbf{1}_{\{z_i = a_i\} \cup \{h_i(z) <0\}} + \mathbf{1}_{\{z_i = b_i\} \cup \{h_i(z) >0\}}\big). 
\end{equation*}
% and for  $1\leq i \leq d$, $z_i = b_i$ (resp. $a_i$)  and $h_i(z)>0$ (resp. $h_i(z)<0$),  then $C_i(z) = - h(z_i)$.  
 Note that the existence of the projected ODE is obtained as a byproduct of the convergence of the SA algorithm (see Theorem \ref{ThKusher1} in Appendix).  
We refer to Section 4.3  in \cite{kushner}  for more details on the projected ODE, or to Section 2.2 of \cite{nagurney1995projected} for a more general presentation of projected ODEs.

Theorem 5.2.1 of \cite{kushner} (Theorem \ref{ThKusher1}) yields that for fixed $\omega \in \Omega$,  the constrained sequence $(Z_n(\omega))$ defined by \eqref{RM2} converges to a point in the limit set of the projected  ODE \eqref{ODE}. In order to prove Theorem \ref{asTheo}, the main point is thus to show that the limit set of projected ODE is reduced to $z^*$, or in other words, that $z^*$ is globally asymptotically stable in the sens of Lyapunov. This point, which is straigthforward in the univariate case, is more complicated for MSRM due to the introduction of the Lagrangian function $L$ from which $h$ derives. \\
Since $z^*$ is in the interior of $K$ and $h(z^*)=0$, $z^*$ is an equilibrium point for the projected ODE \eqref{ODE}. In order to study the asymptotic stability of the equilibrium $z^*$, we define in the following proposition some convenient Lyapunov function $V$. A natural and classical choice for this type of problems is $V(z) = ||z - z^*||^2$. Some recalls on Lyapunov stability and Lyapunov functions are presented in Section \ref{ODEMethod} (see also e.g. \cite{appnonlincont} for more details). The following proposition shows that its derivative along any state trajectory is negative semi-definite on $K$.
\begin{Proposition}\label{negDef}
The function $V(z) = ||z - z^*||^2$ is such that $z \to \dot{V}(z) = \langle \nabla V(z), h(z) + C(z) \rangle$ is negative semi-definite on $K$ with the respect to the ODE in \eqref{ODE}.\\
Furthermore,
\begin{equation}
\dot{V}(z) \leq - W(z):=  2\langle z-z^*, h(z) \rangle \leq 0 , \quad \forall z \in K.  
\end{equation} 
\end{Proposition}
\begin{proof} 
First, let $z = (m, \lambda) \in \interior(K)$ so that
 $\dot{V}(z) = \langle \nabla V(z), h(z)\rangle = 2 \langle z - z^*,  h(z)\rangle,$ and define $L$ the Lagrangian as defined in \eqref{lagrangian}. We have:
\begin{align*}
\langle  z - z^* , h(z)\rangle &=  \langle m - m^*, \lambda E[\nabla l (X -m)] - \mathbf{1}\rangle + (\lambda - \lambda^*) E[l(X-m)]\\
&= - \langle m - m^*, \partial_m L(m, \lambda)\rangle +(\lambda - \lambda^*)\partial_\lambda L(m,\lambda).
\end{align*}
Now, thanks to the convexity of $L$ with respect to $m$, we have:
$L(m^*, \lambda) \ge L(m, \lambda)+\langle m^* - m, \partial_m L(m,\lambda)\rangle$. This in turn implies that
$$ L(m^*,\lambda) - L(m, \lambda) + (\lambda - \lambda^*)\partial_\lambda L(m, \lambda) \ge \langle m^* - m, \partial_m L(m,\lambda)\rangle + (\lambda - \lambda^*)\partial_\lambda L(m, \lambda).$$
But, we also have,
\begin{align*}
L(m, \lambda)& = \sum m_i + \lambda E[l(X-m)]=\sum m_i + \lambda^* E[l(X-m)] +(\lambda - \lambda^*)E[l(X-m)]\\
& = L(m, \lambda^*) + (\lambda - \lambda^*)\partial_\lambda L(m, \lambda).
\end{align*}
The previous inequality becomes then 
$$\langle m - m^* , \partial_m L(m, \lambda)\rangle - (\lambda - \lambda^*)\partial_\lambda L(m, \lambda) \ge  L(m, \lambda^*) - L(m^*, \lambda)$$
The RHS of the last inequality is non-negative, because, $(m^*, \lambda^*)$ is a saddle point, that is $L(m^*,\lambda) \le L(m^*,\lambda^*) \le L(m, \lambda^*)$. Therefore, we get that,
\begin{equation}\label{neg}
\langle  z - z^* , h(z)\rangle \le 0.
\end{equation}
Moreover, since $L$ is strictly convex with respect to $m$, the inequality \eqref{neg} is strict if $m \neq m^*$.\\
 Finally,  let $z \in \partial K$. If $z_i = b_i$  (resp. $a_i$) and $h_i(z) > 0$ (resp. $h_i(z) < 0$) for some $i$, then $C_i(z) = -h_i(z)$. Since  $z^* \in  int(K)$, $a_i< z_i^* < b_i$ and thus  $(z_i - z^*_i)C_i(z)  \le 0$. 	Hence,  $\dot{V}(z) = 2\langle z-z^*, h(z) + C(z) \rangle \le \langle  z - z^* , h(z)\rangle \le 0$, which achieves the proof.
\end{proof}
\begin{Remark}
We cannot conclude that $\dot{V}$ is negative definite on $K$, because $z\neq z^*$ does not imply that $m \neq m^*$. Besides, if $z=(m^*, \lambda)$ such that $\lambda \neq \lambda^*$, we have $\dot{V}(z)=0$ and $z \neq z^*$. 
\end{Remark}
As stated in the previous remark, $\dot{V}$ is not negative definite, and thus the standard Liapunov theorem (see e.g. Theorem 3.3 in \cite{appnonlincont}) cannot be applied. In order to prove the asymptotic stability of $z^*$, we  apply  LaSalle's invariant principle. The invariant set theorem characterizes  the limit set of the ODE as the largest  invariant set in $\{z ; \; \dot{V}(z) = 0\}$. Note that,  due to projection on $K$, the r.h.s of the ODE \ref{ODE} is discontinuous.  Versions of Lasalle's  invariance principle for  very general nonsmooth ODEs can be found in \cite{fischer2013lasalle} (see also e.g.  \cite{filipov}, \cite{shevitz1994lyapunov}). In the case of \eqref{ODE},  the direct proof presented below is facilitated by the fact that $K$ is a compact set.
\begin{Proposition}
\label{asymptotStableProp}
The equilibrium point $z^*$ of the ODE \eqref{ODE} is globally asymptotically stable.
\end{Proposition}
\begin{proof} 
Let us first show that limit points of the ODE \eqref{ODE} are in the set  $ R = \{ z \in K ; \; W(z) = 0\}$. This can viewed as special case  of Corollary 1 in \cite{fischer2013lasalle}. Let $(z(t))$ be a solution to the projected ODE \eqref{ODE} with initial condition $z(0) \in K$. 

The first step of the proof is proving that $W(z(t))  \to 0$ when $t \to \infty$, with $W$ the scalar function defined by $W(z)= 2\langle z-z^*, h(z) \rangle$, by applying the Barbalat's Lemma \ref{BarbalatLemma}. 
By Proposition \ref{negDef}, $\dot V \leq 0$. Hence, $t\to V(z(t))$ is decreasing and 
\begin{equation}
\int_0^t W(z(s)) d s \leq  - \int_0^t \dot V(z(s)) d s =  V(z(0)) - V(z(t)). 
\end{equation} 
Since $V(z(t)) \geq 0$, the l.h.s of the previous inequality is increasing and bounded by $V(z(0))$, and thus $\lim_{t\to \infty}  \int_0^t W(z(s)) d s <\infty$. Since $z$ is absolutely continuous and $h$ is bounded on $K$, $z$ is uniformly continuous.  Finally, since $W$ is a continuous function, it is uniformly continuous on the compact $K$ and thus $t \to W(z(t))$ is uniformly continuous. By Barbalat's lemma, we can conclude that 
\begin{equation*}
\lim_{t\to \infty} W(z(t)) = 0. 
\end{equation*}
Thus, limit points of the ODE \eqref{ODE} are in the set  $R = \{ z \in K; \; W(z) = 0\}$. \\

Since solutions of \eqref{ODE}  are in the bounded domain $K$, the  limit set of the projected ODE is an invariant set (see e.g. Section 4.3 of \cite{kushner} or Lemma 12.5 in \cite{filipov}). It thus remains to show that $\{z^*\}$ is the only invariant set in $R$.
 Let $M$ be the largest invariant set in $R$, and $z = (m, \lambda) \in M$. As discussed in the proof of Proposition \ref{negDef}, if $(m, \lambda) \in R$, then necessarily $m = m^*$. Thus, since $M $ is an invariant set in $R$, if $z(0)=(m, \lambda) \in M$, then $z(t) =(m^*, \lambda(t)) \in R$ for all $t \ge 0$, with  $z(\cdot)$ solution of the following ODE,
\begin{equation}\label{sys}
\left\{
\begin{aligned}
& 0 = \lambda(t)E[\nabla l(X- m^*)] - \mathbf{1} + C(z(t))_m,~ t \ge 0,\\
& \frac{d\lambda(t)}{dt} = E[l(X-m^*)] + C(z(t))_\lambda,~ t \ge 0.
\end{aligned}\right.
\end{equation}
Now, since  $z^*=(m^*,\lambda^*) \in \interior(K)$, we get that, $C(z(t))_m= C((m^*, \lambda(t))_m = 0$. We obtain that $\lambda(t) E[\nabla_m l(X-m^*)] - \mathbf{1} = 0$, and consequently $t\to \lambda(t)$ is a constant function equal to $\lambda^*$,  since $\lambda^*$ is the only solution of the first order condition by Theorem \ref{Main}.

% Moreover, we have $\forall t \ge 0,  E[l(X-m^*)] = 0$ (recall that $h(z^*)=0$), we obtain again that $C(\lambda(t)) = 0$ and $\frac{d\lambda(t)}{dt} = 0$ and consequently $t\to \lambda(t)$ is a constant function, i.e., $\lambda(t) = \lambda, \forall t \ge 0$ . But we also know that, $\frac{dm(t)}{dt} = 0,~\forall t \ge 0$ which implies that the right hand side of the first equation in \eqref{sys} is $0$, i.e. $\lambda E[\nabla_m l(X-m^*)] - \mathbf{1} = 0$. Finally, we deduce that $\lambda = \lambda^*$ given that $(m^*, \lambda^*)$ is the unique $z$ such that $h(z)=0$.\\
We have then showed that the limit set is simply $\{z^*\}$ and therefore $z^*$ is globally asymptotically stable equilibrium for the ODE \eqref{ODE}. 
\end{proof}
\begin{proof} [Proof of Theorem \ref{asTheo}]
By Proposition \ref{asymptotStableProp},  $z^*$ is the only point  in the limit set of ODE \eqref{ODE}. \\
Thus, if the assumptions \ref{AssumptionKushner1} (1)-(3) are verified,   we can conclude that $Z_n \rightarrow z^*$ as $n \rightarrow \infty$ by Theorem \ref{ThKusher1} in the Appendix.
 The first assumption \eqref{Kush1A1} of the theorem is exactly  the Assumption \ref{Aas}-(ii).  Assumption \eqref{Kush1A3} corresponds to the continuity of $h$, i.e. Assumption \ref{Aas}-(i). Finally, Assumptions \eqref{Kush1A4}  corresponds to the hypothesis \eqref{gamma} on the step sequence $(\gamma_n)$, which concludes the proof. 
\end{proof}
\subsection{Proof of the asymptotic normality}
The proof of Theorem \ref{aNorm} is obtained as an application of Theorem \ref{ThKushner2} in the Appendix (Theorem 10.2.1  in \cite{kushner}), with $g = ch$  and $Y_n = cH(X_{n+1}, Z_n)$ and $\epsilon_n = \frac{1}{n^{\gamma}}$. 
\label{SectionProofTh2}
\begin{proof} [Proof of Theorem \ref{aNorm}]
By the proof of Theorem \ref{asTheo}, Assumption \ref{AssumptionKushner1} is satisfied.  Let us prove that Assumption \ref{AssumptionKushner2} holds.  \\
First, let us start with the case $\gamma \in (\frac{1}{2}, 1)$. Then  Assumption \ref{AssumptionKushner2}.\ref{A1}  is true since $\mu_n = 1+ \frac{\gamma}{2n} + o(\frac{1}{n^2})$.  Assumption \ref{AssumptionKushner2}.2 corresponds to the  assumption \ref{Aan}-\ref{Aan2}. \\
Let  $(Z^n(\cdot))$ be the sequence of continuous time processes defined by \eqref{interpolation}. For all $t\geq 0$ and $n\in \N$,  $Z^n(t) \in \{Z_p ;  \;  p\geq n \}\subset K$. Hence, the sequence $(Z^n(\cdot))$ is uniformly bounded. Furthermore,  since $(Z_n)$ converges a.s. to $z^*$ by Theorem \ref{asTheo},  Aldous' criterion for tightness (see e.g. Theorem VI.4.5 in \cite{jacod2013limit}) is satisfied and hence the family  processes $(Z^n(\cdot))$ is tight. Finally, for all $t\geq 0$, $Z^n(t) = Z^0(t_n + t)$ converges a.s. to the constant $z^*$ by definition of the interpolation process $Z^0$. Thus,  $(Z^n(\cdot))$  converges  weakly to $z^*$, and Assumption \ref{AssumptionKushner2}.\ref{A3} holds.\\
Assumption \ref{AssumptionKushner2}.\ref{A4} corresponds to \ref{Aan}-\ref{Aan1}. The first  part of  {\color{blue} \ref{AssumptionKushner2}.\ref{A6}}  is \ref{Aan}-\ref{Aan3}. By  \ref{Aan}-\ref{Aan4} (continuity of $\Sigma$ at $z^*$) and since $Z_n \to z^*$ a.s., 
$$E[(Y_n-g(Z_n))(Y_n- g(Z_n))^\intercal\mathbf{1}_{|Z_n - z^*|\leq \rho}] 
 \to \Sigma_1 : = c^2 \Sigma^*. $$
Finally, \ref{AssumptionKushner2}.\ref{A9} holds for $\epsilon_n = \frac{1}{n^\gamma}$ (see e.g. p.341 in \cite{kushner}).\\
It remains to show that \ref{AssumptionKushner2}.\ref{A5} holds, that is the matrix $A' = cA$ is a Hurwitz matrix. We have:
$$A=\left (
   \begin{array}{c|c}
   	  \\
      \lambda^* D E[\nabla l(X-m^*)] & E[\nabla l(X-m^*)] \\
      \\
      \hline
      -E[\nabla l(X-m^*)] & 0
   \end{array}
\right) =  -\left( \begin{array}{c|c}
   	  \\
      \hat{A} & -\frac{1}{\lambda^*} \\
      \\
      \hline
      ~~\frac{1}{\lambda^*}~~& 0 
   \end{array}\right),$$
where $\hat{A}\vcentcolon=-\lambda^*DE[\nabla l(X-m^*)]$ corresponds to the second derivative of the Lagrangian $L$ with respect to $m$. Note that $L$ is strictly convex with respect to $m$ due to the strict convexity of $m \mapsto E[l(X-m)]$. This implies that $\hat{A}$ is a positive definite matrix. Thanks to Theorem 3.6 in \cite{saddle}, we deduce that $A$ (and thus $A'$) is a Hurwitz matrix.

For the case $\gamma = 1$, it remains to prove  Assumption \ref{AssumptionKushner2}.\ref{A9}, ie that $\lambda >1$ where $\lambda$  is the largest real number such that $I\geq \lambda P'$, with $P'$ the solution of the Lyapunov equation $(A')^\intercal P' + P'A' = -I$. Recalling $A' =cA$, $P'$ is  also solution of $A^\intercal cP' + cP'A = -I$.  By uniqueness of the Lyapunov equation's solution,  we obtain that $P=cP'$, with $P$ as defined in Theorem \ref{aNorm}.  Since $cI-P=c(I-P')$ is positive definite by assumption, $I-P'$ is positive definite and thus $\lambda>1$, which achieves the proof.
\end{proof}
\subsection{Polyak-Ruppert Averaging}
\label{sectionPR}
In order to ease the tuning of the step parameter which is known to monitor the numerical efficiency of RM algorithms, we are led to modify our algorithm and to use an averaging procedure. Averaging algorithms were introduced by Ruppert (see \cite{ruppert}) and Polyak (see \cite{polyak}) and then widely investigated by many authors. \cite{kushner} and \cite{yang} studied these algorithms in combination with projection and proved a Central Limit Theorem (CLT) for averaging constrained algorithms.\\
The following theorem describes the Polyak-Ruppert algorithm for MSRM and states its asymptotic normality. It is a direct consequence of theorem 11.1.1 in \cite{kushner}. 
\begin{Theorem}\label{aNormPoly}
Assume $\gamma \in (\frac{1}{2}, 1)$ and that assumptions \ref{Al'}, \ref{Aas} and \ref{Aan} hold. For any arbitrary $t > 0$, we define 
\begin{equation}\label{Poly}
\bar{Z}_n = \frac{\gamma_n}{t}\sum_{i=n}^{n + t/\gamma_n - 1} Z_i,
\end{equation}
where any upper summation index $u \in \R^+$ is interpreted as its integer part. If $\Sigma^*$ is positive definite, then we have the following CLT:
\begin{equation}\label{PolyNorm}
\sqrt{\frac{t}{\gamma_n}}\left(\bar{Z}_n - z^*\right) \underset{n\to \infty}{\longrightarrow} \mathcal{N}\left(0, V + O\left(\frac{1}{t}\right)\right),
\end{equation}
where $V = A^{-1}\Sigma^* (A^{-1})^\intercal$.
\end{Theorem}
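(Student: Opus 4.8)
The plan is to apply Theorem 11.1.1 of \citet{kushner}, whose hypotheses I would verify one by one, reusing most of the checks already carried out for Theorem \ref{aNorm}. The structural facts I rely on are: $Z_n \to z^*$ almost surely (Theorem \ref{asTheo}) with $z^*$ interior to $K$; the step sequence is $\gamma_n = c\,n^{-\gamma}$ with $\gamma \in (\frac12,1)$, which is exactly the slowly-decreasing regime that makes averaging effective; and $A = Dh(z^*)$ is Hurwitz, as established inside the proof of Theorem \ref{aNorm}. Because $z^*$ lies in the interior and the iterates converge to it, there is almost surely a random index beyond which the reflection term $C_n$ in \eqref{ODE} vanishes, so the tail of \eqref{RM2} coincides with an unconstrained Robbins--Monro recursion; this is precisely what permits the unconstrained averaging analysis to apply.

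I would then record the smoothness and noise conditions. Differentiability of $m \mapsto E[\nabla l(X-m)]$ in \ref{Aan}-\ref{Aan1} licenses the linearization $h(z) = A(z-z^*) + o(\|z-z^*\|)$ near $z^*$, while the martingale-difference structure of $\delta M_n$ together with the uniform integrability \ref{Aan}-\ref{Aan2}, the local $(2+p)$-moment bound \ref{Aan}-\ref{Aan3}, and the continuity of $\Sigma(\cdot)$ at $z^*$ in \ref{Aan}-\ref{Aan4} supply the Lindeberg-type conditions and identify the local noise covariance as $\Sigma^*$. Positive definiteness of $\Sigma^*$ guarantees the limiting law is non-degenerate.

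The mechanism producing the optimal covariance is a summation by parts on the linearized tail recursion $U_{i+1} - U_i = \gamma_i(A U_i + \delta M_i) + \gamma_i r_i$, with $U_i = Z_i - z^*$. Summing over the window $i = n, \dots, n + t/\gamma_n - 1$ and telescoping gives $A \sum_i \gamma_i U_i = (U_N - U_n) - \sum_i \gamma_i \delta M_i - \sum_i \gamma_i r_i$, so that $\bar Z_n - z^*$ equals, to leading order, $-\tfrac1t A^{-1}\sum_i \gamma_i \delta M_i$. Since $\gamma \in (\frac12,1)$ forces the window length $t/\gamma_n$ to be of smaller order than $n$, the step sizes are essentially constant across the window, $\gamma_i \approx \gamma_n$, which both justifies replacing $\sum_i \gamma_i \delta M_i$ by $\gamma_n \sum_i \delta M_i$ and makes the boundary contribution $U_N - U_n$, of size $O_P(\sqrt{\gamma_n})$, collapse into the announced $O(1/t)$ correction after rescaling by $\sqrt{t/\gamma_n}$. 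A martingale central limit theorem applied to $\sqrt{\gamma_n/t}\,A^{-1}\sum_i \delta M_i$, using the covariance $\Sigma^*$ and the moment controls above, then yields the Gaussian limit with covariance $A^{-1}\Sigma^*(A^{-1})^\intercal = V$.

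The main obstacle I anticipate is twofold, and both parts are conceptual rather than computational: first, justifying rigorously that the projection is asymptotically inactive so that the unconstrained expansion is legitimate — this rests on the interior location of $z^*$ and a.s. convergence, but must be phrased precisely to feed Theorem 11.1.1; second, controlling the edge-of-window terms and the variation of $\gamma_i$ within the averaging window so as to show they contribute only the $O(1/t)$ term and leave the leading covariance $V$ independent of the constant $c$ — which is the whole point of averaging. Everything else mirrors the verification already done for Theorem \ref{aNorm}, so the substance of the argument is selecting the correct hypotheses of \citet{kushner} and invoking the already-established Hurwitz property of $A$ together with the almost-sure convergence.
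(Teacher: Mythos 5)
Your proposal is correct and takes essentially the same approach as the paper: the paper's entire proof is the single remark that the theorem is a direct consequence of Theorem 11.1.1 in \citet{kushner}, with its hypotheses covered by the verifications already carried out for Theorem \ref{aNorm}. Your extra material --- the argument that the projection is asymptotically inactive and the summation-by-parts heuristic identifying the limiting covariance $A^{-1}\Sigma^*(A^{-1})^\intercal$ --- merely unpacks what that citation encapsulates, so there is no divergence of method.
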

\begin{Remark}\
\begin{enumerate}
\item In \eqref{Poly}, the \textit{window of averaging} is $t/\gamma_n$ for any arbitrary real $t > 0$. Equivalently, $\gamma_n \times$ (size of window) does not go to infinity as $n \to \infty$, hence the name \q{minimal window} of averaging. In contrast, the \q{maximal window} of averaging corresponds to a window size $q_n t$ such that $\gamma_n q_n \to \infty$. A natural and a classical choice is taking $\gamma_n = c / n^\gamma$ and $q_n =n$. In the case of \textit{maximal window} of averaging, under some extra conditions, we are able to achieve the optimal asymptotic variance without an extra term $O(1/t)$(see Theorem 11.3.1 in \cite{kushner}).
\item Two sided averages can also be used instead of the one-sided average in \eqref{Poly}.
\end{enumerate}
\end{Remark}
\paragraph{Estimator of asymptotic variance}
The previous CLT theorems assert that, under some suitable conditions, our RM and PR algorithms converge to the root $z^*$ with a corresponding rate. More specifically, in Theorem  \ref{aNormPoly}, the asymptotic variance $V$ depends on $\Sigma^*$ and $A$. In practice, these two quantities are unknown and need to be approximated in order to derive confidence intervals for our estimators.
In Theorem \ref{aNorm}, in both cases, $\gamma = 1$ and $\gamma \in (\frac{1}{2}, 1)$, the asymptotic variance is expressed as an infinite integral that involves $\Sigma^*$ and $A$. The numerical evaluation of these integrals is a non-trivial exercise even when $\Sigma^*$ and $A$ are known. In \cite{estimation}, they described an approach that produces confidence regions and that avoids the necessity of having to explicitly estimate these integrals.\\
In the following proposition, we provide consistent estimators of these two quantities. The proof relies mainly on the Martingale Convergence Theorem.
\begin{Proposition}\label{estVarJac}
Assume \ref{Al'}, \ref{Aas} and \ref{Aan} hold.\\
If $z \to E[||H(X,z)||^4]$ is locally bounded around $z^*$, then,
\begin{equation}\label{estVar}
\Sigma_n \vcentcolon= \frac{1}{n}\sum_{k=1}^{n}H(X_{k}, Z_{k - 1}) H(X_{k}, Z_{k - 1})^\intercal \to \Sigma^* ~a.s.
\end{equation}
Let $A_n^\epsilon$ the matrix whose elements $A_n^\epsilon(i, j)$ for $i, j \in \{1,..., d+1\}$ are defined as follows:
$$A_n^\epsilon(i,j) \vcentcolon = \frac{1}{\epsilon n}\sum_{k=1}^n H_i(X_k, Z_{k - 1} + \epsilon e_j) - H_i(X_k, Z_{k - 1} ),$$
then,
\begin{equation}\label{estJac}
\underset{\epsilon \to 0}{\lim}~\underset{n \to \infty}{\lim} A_n^\epsilon = A~a.s.
\end{equation}
\end{Proposition}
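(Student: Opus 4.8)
The plan is to read both displayed limits as strong laws of large numbers for sums of the form $\frac1n\sum_{k=1}^n F(X_k,Z_{k-1})$, in which $Z_{k-1}$ is $\mathcal F_{k-1}$-measurable while $X_k$ is independent of $\mathcal F_{k-1}$ with the law of $X$. In each case I would write $F(X_k,Z_{k-1})=\big(F(X_k,Z_{k-1})-\bar F(Z_{k-1})\big)+\bar F(Z_{k-1})$, where $\bar F(z):=E[F(X,z)]$, so that $D_k:=F(X_k,Z_{k-1})-\bar F(Z_{k-1})$ satisfies $E[D_k\mid\mathcal F_{k-1}]=0$, i.e. $(D_k)$ is a martingale difference sequence for $(\mathcal F_n)$. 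The two displays then follow by showing separately that the empirical average of the $D_k$ vanishes and that the empirical average of the $\bar F(Z_{k-1})$ converges to the claimed limit.

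For the martingale part I would invoke the $L^2$ martingale convergence theorem, as the statement suggests: the series $M_n:=\sum_{k=1}^n D_k/k$ is a martingale whose conditional-variance series is $\sum_k k^{-2}E[\|D_k\|^2\mid\mathcal F_{k-1}]$, and once this is a.s. finite, $M_n$ converges a.s. and Kronecker's lemma yields $\frac1n\sum_{k=1}^n D_k\to 0$ a.s. To bound the conditional variances I would use that $Z_n\to z^*$ a.s. by Theorem \ref{asTheo}, together with the moment hypotheses. For \eqref{estVar}, take $F(X,z)=H(X,z)H(X,z)^\intercal$; then $\|F\|^2=\|H\|^4$ in Frobenius norm and the local boundedness of $z\mapsto E[\|H(X,z)\|^4]$ near $z^*$ gives a uniform bound $E[\|D_k\|^2\mid\mathcal F_{k-1}]\le M$ for all $k$ large enough. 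For \eqref{estJac}, fix $\epsilon>0$ small enough that $z^*+\epsilon e_j\in\interior(K)$ and take $F(X,z)=\epsilon^{-1}\big(H_i(X,z+\epsilon e_j)-H_i(X,z)\big)$; here $\sup_{z\in K}\sigma^2(z)<\infty$ from \ref{Aas2} and the boundedness of the continuous map $h$ on the compact $K$ give $E[\|H(X,z)\|^2]\le C$ on $K$, whence $E[\|D_k\|^2\mid\mathcal F_{k-1}]\le 4C/\epsilon^2$ for $k$ large. In both cases the tail of $\sum_k M k^{-2}$ converges and the martingale average vanishes.

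For the conditional-mean part I would use continuity of $\bar F$ at $z^*$ together with the Ces\`aro lemma: since $Z_{k-1}\to z^*$ a.s., $\bar F(Z_{k-1})\to\bar F(z^*)$ a.s., hence $\frac1n\sum_{k=1}^n\bar F(Z_{k-1})\to\bar F(z^*)$ a.s. For \eqref{estVar}, $\bar F(z)=E[H(X,z)H(X,z)^\intercal]=\Sigma(z)+h(z)h(z)^\intercal$ is continuous at $z^*$ by \ref{Aan4} and \ref{Aas1}, and $\bar F(z^*)=\Sigma^*+h(z^*)h(z^*)^\intercal=\Sigma^*$ because $h(z^*)=0$; this proves $\Sigma_n\to\Sigma^*$ a.s. For \eqref{estJac}, $\bar F(z)=\epsilon^{-1}\big(h_i(z+\epsilon e_j)-h_i(z)\big)$ is continuous at $z^*$, so $\lim_{n\to\infty}A_n^\epsilon(i,j)=\epsilon^{-1}\big(h_i(z^*+\epsilon e_j)-h_i(z^*)\big)$ a.s. Letting $\epsilon\to 0$ and using that $h$ is differentiable at $z^*$ with $Dh(z^*)=A$ (assumption \ref{Aan1}), this finite-difference quotient converges to $\partial_j h_i(z^*)=A(i,j)$, which gives \eqref{estJac}.

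I expect the delicate point to be the summability condition for the martingale convergence theorem with the random argument $Z_{k-1}$: the conditional-variance bound only holds once $Z_{k-1}$ has entered a neighborhood of $z^*$ on which the relevant moment is finite, so the finitely many early terms have to be handled by a localization argument, restricting to the event $\{Z_{k-1}\in B\}$ for a fixed neighborhood $B$ of $z^*$ on which the moments are uniformly controlled and using that this event occurs for all large $k$ by Theorem \ref{asTheo}. A second subtlety specific to \eqref{estJac} is that the two limits cannot be interchanged, since the martingale bound degrades like $\epsilon^{-2}$; one must first send $n\to\infty$ for each fixed $\epsilon$ and only afterwards let $\epsilon\to0$, which is precisely the iterated limit in the statement.
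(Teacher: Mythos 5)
Your proof follows essentially the same route as the paper's: the identical decomposition into a martingale-difference part, handled by applying the martingale convergence theorem to $\sum_k S_k/k$ and then Kronecker's lemma, and a conditional-mean part, handled by continuity of $\Sigma(\cdot)$ and $h$, the a.s.\ convergence $Z_n \to z^*$, Ces\`aro averaging, and the fact that $h(z^*)=0$. If anything, you are more careful than the paper on two points it glosses over: the localization needed before the conditional-variance bound applies (the paper simply asserts $\sup_{n}E[\|S_n\|^2 \mid \mathcal{F}_{n-1}]<\infty$ a.s., although the fourth moment is only assumed locally bounded near $z^*$), and the explicit fixed-$\epsilon$-then-$\epsilon\to 0$ argument for \eqref{estJac}, which the paper dispatches with ``the same arguments''.
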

\begin{proof} 
Let $(S_n)_{n \in \N^*}$ be the sequence defined as:
$$S_n = H(X_n, Z_{n - 1}) H(X_n, Z_{n - 1})^\intercal - \Sigma(Z_{n-1})-h(Z_{n-1}) h(Z_{n-1})^\intercal, ~n \ge 1$$
 $(S_n)_{n \in \N^*}$ is a martingale difference sequence adapted to $\mathcal{F}$ and consequently the following sequence $(M_n)_{n \in \N^*}$ defined as:
$$M_n = \sum_{i=1}^n \frac{S_i}{i}, ~~n\ge 1,$$
is a $\mathcal{F}$-martingale. Moreover, since $Z_n \to z^*$ a.s., $h$ and $\Sigma$ are continuous in $z^*$ (Assumptions \ref{Aas}-\ref{Aas1} and \ref{Aan}-\ref{Aan4}),  and by the boundedness of $z \to E[||H(X,z)||^4]$ around $z^*$, we have that:
$$\underset{n \ge 1}{\sup}~E[||S_n||^2 | \mathcal{F}_{n-1}] < \infty~~a.s.$$
Thus, the martingale convergence theorem ensures the existence of a finite random variable $M_\infty$ such that $M_n \to M_\infty~a.s.$ Kronecker's lemma then guarantees that $\frac{1}{n}\sum_{i=1}^n S_i \to 0$. Now, since,
$$\Sigma_n = \frac{1}{n}\sum_{i=1}^n S_i +\frac{1}{n}\sum_{i=1}^n \Sigma(Z_{i-1}) +\frac{1}{n}\sum_{i=1}^n h(Z_{i-1}) h(Z_{i-1})^\intercal,$$
we deduce that $\Sigma_n \to \Sigma^*$.\\
The proof of \eqref{estJac} follows using the same arguments above.
\end{proof}
\begin{Remark}\
\begin{enumerate}
\item Instead of averaging on all observations, one could modify the estimators above and average only on recent ones. This might improve the behaviour of these estimators.
\item If we denote $V_n \vcentcolon= (A_n^\epsilon)^{-1} \Sigma_n ((A_n^{\epsilon})^{-1})^\intercal$, then we obtain an approximate confidence interval for PR estimator with a confidence of $1-\alpha$ in the following form:
\begin{equation}\label{confInt}
\left[\bar{Z}_{j, n} - \sqrt{\frac{V_{jj, n}}{t n^\gamma}}q_\alpha, \bar{Z}_{j, n} + \sqrt{\frac{V_{jj, n}}{t n^\gamma}}q_\alpha\right], j\in \{1...d\},~\gamma \in (0,1),
\end{equation}
where $q_\alpha$ is the $1-\frac{\alpha}{2}$ quantile of a standard normal random variable. Note that this confidence interval has the advantage of being obtained with one simulation run. For RM estimators, confidence intervals could be estimated empirically, however a naive approach would multiple runs of the SA algorithm.
\end{enumerate}
\end{Remark}

\subsection{Variance reduction with adaptive importance sampling (IS)}
\label{SectionIS}
The convergence of the stochastic algorithms presented in this section can be improved by implementing variance reduction techniques such as important sampling (IS).  The original idea of IS for the Monte-Carlo computation of an expectation $E[G(X)]$, is to find a change of probability measure $P'$, such that $E[G(X)]= E'[G'(X)] $ and $\textrm{Var}_{P'} (G'(X)) < \textrm{Var} (G(X))$. The main issue is then to find the right change of measure in order to reduce the variance. A possible parametric approach is to look for the optimal parameter $\theta^*$ minimizing the variance, among a family of distribution $(P_\theta)$.  Under some assumptions, the minimization problem  can be reinterpreted as finding the root of a function written as an expectation. This  is the main idea  of recursive IS by stochastic algorithm, where the parameter $\theta^*$ is estimated by a RM algorithm. \\
The Esscher transform (or exponential tilting) is  a natural tool for parametric IS, used extensively in actuarial science and for variance reduction. This transfom is a natural change of measure since it corresponds to a translation in the gaussian case, and a change of intensity for Poisson processes.  Let us first recall the main idea behind the Esscher transform and recursive IS in the standard case. For more details, see e.g. see e.g. \cite{Arouna}, \cite{Kawai2009}, or  \cite{LemairePages}. 
\paragraph{Esscher transform and parametric IS}  In the following, we assume that the $\R^d$-valued random variable $X$ has a density $f$.  For $\theta \in \R^d$, let $\psi$ be the log-Laplace transform of $X$, assumed to be finite: 
$$ \psi(\theta) = \log \big( E[\exp(\langle\theta, X\rangle]\big) < \infty. $$
Then, $\psi$ is convex  and infinitely differentiable. The Esscher transform is defined by the following change of measure introducing the density $g_\theta$: 
\begin{equation}
g_{\theta}(x) =\exp\left( \langle \theta, x\rangle - \psi (\theta) \right) f(x). 
\end{equation}
In the following we denote by $X^{(\theta)}$ the r.v. of density $g_\theta$. 
The expectation  can be replaced by 
$$E[G(X)] =E[G(X^{(\theta)})\exp \left(  - \langle X^{(\theta)}, \theta \rangle  + \psi(\theta) \right) ].$$
The main idea of the recursive IS procedure is to choose $\theta$ in order to  minimize the variance of  $G(X^{(\theta)})\exp \left(  - \langle X^{(\theta)}, \theta \rangle  + \psi(\theta) \right)$,  which boils down to minimizing the function 
\begin{align*}
 E[G^2(X^{(\theta)})\exp \left(  - 2\langle X^{(\theta)}, \theta \rangle  + 2\psi(\theta) \right)] = E[G^2 (X) \exp \left(  - \langle X, \theta \rangle  + \psi(\theta) \right) ]. 
\end{align*}
In other words, the optimal parameter is 
\begin{equation}
\theta^* = \text{argmin } E[G^2 (X,z) \exp \left(  - \langle X, \theta \rangle  + \psi(\theta) \right) ].
\end{equation}
Under some regularity assumptions, the minimization problem can be solved using stochastic algorithms. 
\paragraph{Adaptative RM algorithm with IS} Our setting of multivariate systemic risk measures is more complex. For $1 \leq i \leq d+1$, let $H_i$ be the $i$ component of the function $H$ defined by \eqref{DefH}. Then, for $z = (m, \lambda) \in \R^d \times [0,\infty[$ and $\theta \in \R^d$, 
\begin{equation}
\label{EqISEsscher}
h_i(z) = E[H_i(X,z) ] = E[H_i ( X^{(\theta)}, z) \exp \left(  - \langle X^{(\theta)}, \theta \rangle  + \psi(\theta) \right) ],
\end{equation}
Let  $\mathcal V = (\mathcal  V_i)_{1 \leq i \leq d}$, with $\mathcal  V_i$  the function defined for $z \in \R^d \times [0,\infty[$ and $\theta \in \R^d$ by
\begin{align}
\nonumber\mathcal   V_i(\theta,z)  & = E[H_i^2 ( X^{(\theta)}, z) \exp \left(  - 2\langle X^{(\theta)}, \theta \rangle  + 2\psi(\theta) \right) ] \\
 & = E[H_i^2 (X,z) \exp \left(  - \langle X, \theta \rangle  + \psi(\theta) \right) ]. 
\end{align}
In order to reduce the asymptotic variance obtained in Theorem \ref{aNormPoly}, it is natural to take $\theta_i= \theta_{i}^*$ in \eqref{EqISEsscher} such that 
\begin{equation}
\label{DefThetaOpt}
\theta_{i}^* = \text{argmin } \mathcal  V_i(\theta,z^*) = \text{argmin } E[H_i^2 (X,z^*) \exp \left(  - \langle X, \theta \rangle  + \psi(\theta) \right) ]. 
\end{equation}
Assume that for each $1\leq i \leq d$, 
\begin{equation*}
E[|X|H_i^2(X,z)\exp \left(  - \langle X, \theta \rangle  + \psi(\theta) \right)] <\infty, \quad \forall z\in \R^d\times [0,\infty[ \; ,\theta \in \R^{d}.
\end{equation*}
Then,  $\mathcal V(\cdot, z)$ is differentiable and   for $1\leq i \leq d$, 
\begin{equation}
\nabla \mathcal  V_i (\theta,z ^*) = E[(\nabla \psi(\theta) - X) H_i^2(X,z^*) \exp \left(-\langle \theta, X \rangle + \psi(\theta) \right)]. 
\end{equation}
\cite{LemairePages} (Proposition 3, see also \cite{pages}) give sufficient conditions for  $\mathcal  V_i(\cdot  ,z^*)$ to be convex and for the set $\{ \theta \in \R^d ; \; \nabla \mathcal V_i(\theta,z^*)=0\}$   to be non empty. 
We also refer to \cite{Kawai2009} in the special case when $X =\xi_T$ with $\xi$ a Levy process.  A RM algorithm as in \eqref{RM2} can be implemented to find the root of each gradient $\nabla \mathcal  V_i$, replacing $H_i$ with 
\begin{equation}
\label{EqFISVar}
F_i(X, z^*,\theta) = (\nabla \psi(\theta) - X) H_i^2(X,z^*) \exp \left(-\langle \theta, X \rangle + \psi(\theta) \right). 
\end{equation} 
However such an algorithm cannot be implemented in practice, since $z^*$ is unknown. The same issue appears in the case of the VaR/CVaR estimation in \cite{pages}. Hence, the authors  introduced an adaptive procedure combining the RM algorithm and recursive Importance Sampling,  by substituting the unknown parameters at the $n$th step the by their estimation at the previous step. This idea can be adapted to our framework of multivariate systemic risk measures. \\

Let us first introduce some notations. For $\Theta = (\theta_1, \dots , \theta_d) \in \R^{d(d+1)}$, let $\tilde{H}$ be the function replacing $H$ in 
\eqref{EqISEsscher}, defined by 
\begin{equation*}
\tilde{H}(X,z,\Theta) = \left(H_i(X,z)\exp\left( - \langle X, \theta_i \rangle + \psi(\theta_i)\right)\right)_{\{1\leq i \leq d+1\}}, 
\end{equation*}
and 
\begin{equation*}
F(X,z, \Theta) = \left( F_i(X,z,\theta_i) \right)_{\{1\leq i \leq d+1\}}, 
\end{equation*}
where $F_i$ has been defined in \eqref{EqFISVar}. Finally, we denote by $X^{(\Theta)}= (X^{(\theta_1)}, \dots, X^{(\theta_d)})$ the $\R^{d^2}$-valued random vector, where each $\R^d$ component $X^{(\theta_i)}$ has the density $g_{\theta_i}$ and is independent of the others. Finally,  let $\Sigma(z,\Theta)$ be the covariance matrix of $\tilde{H}(X,z,\Theta)$.

The new stochastic algorithm is defined as follow: 
\begin{align}
& \nonumber Z_{n+1} =\Pi_K [Z_n + \gamma_n \tilde H( X_{n+1}^{(\Theta_n)}, Z_n, \Theta_n) ], \\
& \label{ISalgo} \Theta_{n+1} =\Pi_{K'} [\Theta_n  + \gamma_n F(X_{n+1}, Z_n, \Theta_n) ],
\end{align}
where $K'$ is an hyper rectangle containing $0$ and $\Theta^*$. In the specific case of the VaR-CVaR approximation, \cite{pages} propose an unconstrained version of the algorithm, based on \cite{LemairePages}. However, the procedure cannot be applied here since $H$ does not verify the sublinearity condition, as discussed in Section \ref{3}.  However, as for $K$,  $K'$ can also be replaced by an increasing sequence of compact $(K'_{\sigma(n)})_{n \in \mathbb{N}}$. 

The proof of Theorem \ref{asTheo} and \ref{aNorm} can be adapted to obtain the a.s. convergence and asymptotic normality of the approximations $(Z_n,\Theta_n)_n$ introduced in \eqref{ISalgo}, as well as its Polyak-Ruppert counterpart. In particular,  the covariance matrix $\Sigma^* = \Sigma(z^*, 0)$ will be  replaced  by  $\Sigma(z^*, \Theta^*)$, minimizing the variance of the vector's $\tilde{H}(X,z^*,\cdot)$ components. \\
The asymptotic variance for the Polyak-Ruppert estimator with adaptative IS is actually $V(\Theta^*) = A^{-1}\Sigma(z^*, \Theta^*) (A^{-1})^\intercal$, with $A$ the Jacobian matrix of $h$ at $z^*$.   The matrix $A$ does not depend on the $\Theta^*$, but the diagonal elements of the covariance matrix $V$ might not minimize the function $V(\Theta) = A^{-1}\Sigma(z^*, \Theta^*) (A^{-1})^\intercal$.  
In the case of the joint computation of the VaR/CVaR, the expression of the matrix $A^{-1}$ is explicit, and this might  be the case for MSRM for specific loss functions. Another potential drawback of the adaptative RM with IS \eqref{ISalgo} is the number $d(d+1)$ of additional parameters introduced by the variance reduction procedure. Once again, the number of parameters could be  significantly reduced depending on the loss function $l$.  
\begin{Remark}
In dimension 1, it is straightforward to show that $\theta^*$ minimizes $V(\theta)$. Hence the adaptative algorithm \eqref{ISalgo}   also provides a general variance reduction method for univariate shortfall risk measures. This is very useful when the optimal change of measure is not known beforehand, which is the case most of the time.
\end{Remark}
\section{Numerical examples}
\label{4}
In this section, we test the performance of the proposed stochastic algorithms schemes for MSRM. 
We provide numerical examples showing the consistency properties of the different estimators and then their normal asymptotic behaviour,  with (PR algorithm) and without averaging (RM algorithm). Two examples are considered. In the first one, we consider a loss function of an exponential type coupled with a normal distribution. This example is relevant for our numerical analysis as we can explicitly express the optimal allocations in a closed form. In the second example, we consider a loss function that involves positive part function with a Gaussian and a compound Poisson distributions.  All the algorithms have been implemented on a standard computer using the programming language Python, and are publicly available at \url{https://github.com/AchrafTamtalini/SRM/}.\\
In the following, $n$ will denote the number of steps in one simulation run and $N$ the number of simulations. We introduce the following sequences:
\begin{equation}\label{average}
~\bar{D}_{n} \vcentcolon= \sqrt{t n^\gamma}(\bar{Z}_{n} - z^*)~~\gamma \in (\frac{1}{2}, 1),
\end{equation}
\begin{equation}
D_{n} \vcentcolon= \sqrt{n^\gamma}(Z_{n}-z^*),~\gamma \in (\frac{1}{2}, 1].
\end{equation}
\subsection{Toy example}
As a first simple example, we will consider  a exponential loss function of the following form:
\begin{equation}
\label{lossfunction1}
l(x_1,...,x_d) = \frac{1}{1+\alpha}\left[\sum_{i=1}^d e^{\beta x_i} + \alpha e^{\beta\sum_{i=1}^d x_i}\right] - \frac{\alpha + d}{\alpha + 1}
\end{equation}
We will set $d=2$ and consider a bivariate normal vector $X=(X_1, X_2)\sim \mathcal{N}(0, M)$ with $M = \begin{pmatrix}
\sigma_1^2 & \rho \sigma_1 \sigma_2\\
\rho \sigma_1 \sigma_2  & \sigma_2^2
\end{pmatrix}$. $\alpha$ is a systemic weight parameter taken to be non negative and $\beta > 0$ is the risk aversion coefficient. In this case, we can explicitly solve the first order conditions and derive closed formulas for optimal allocations. This will be useful to test our algorithms. The following Lemma extends Example 3.12 in \cite{armenti}:
\begin{Lemma}
Let $l$ be the loss function defined by \eqref{lossfunction1}, and $X$ the vector of random losses as above. Then, the optimal risk allocation is: 
\begin{equation}
m_i^* =
\left\{
\begin{aligned}
&\frac{\beta\sigma_i^2}{2},~~\text{if}~\alpha = 0,\\
&\frac{\beta\sigma_i^2}{2}+ \frac{1}{\beta}SRC(\rho, \sigma_1, \sigma_2, \alpha, \beta), ~~\text{if}~ \alpha > 0.
\end{aligned}\right. 
\end{equation}
With 
$$SRC(\rho, \sigma_1, \sigma_2, \alpha, \beta) = \ln\left(\frac{\alpha e^{\rho \beta^2\sigma_1 \sigma_2}}{-1 + \sqrt{1+\alpha(\alpha+2)e^{\rho\beta^2 \sigma_1 \sigma_2}}}\right).$$
\end{Lemma}
This shows that, in the case $\alpha > 0$, the risk allocations are disentangled into two components: an individual contribution $\frac{\beta\sigma_i^2}{2}$ and the \textit{Systemic Risk Contribution} term  (SRC). Note that taking $\alpha \to 0$ makes the SRC null since the systemic weight $\alpha$ is responsible for the systemic contribution in the loss function $l$. One can also show, by easy calculations, that the SRC is increasing with respect to $\rho$: the higher the correlation is,  the more costly the acceptable monetary allocations are. This could be explained by the fact that, with a higher correlation between the two components, the losses of one will induce the loss of the other and consequently the system will become riskier. Note also that we could also express in a closed form the Jacobian matrix $A$ and $\Sigma^*$.
\begin{proof}
The optimal risk allocations $m_i^*$ are characterized by the first order conditions given in Theorem \ref{Main}, i.e.
\begin{equation*}
\left\{
\begin{aligned}
&\frac{\lambda^*}{1+\alpha}E\left[e^{\beta(X_i - m_i^*)}+\alpha e^{\beta(X_1+X_2 - m_1^*-m_2^*)}\right]=1,~~ i=1, 2,\\
&\frac{1}{1+\alpha}E\left[e^{\beta(X_1-m_1^*)}+e^{\beta(X_2-m_2^*)} +\alpha e^{\beta(X_1+X_2-m_1^*-m_2^*)}\right]-\frac{\alpha+2}{\alpha+1}=0.
\end{aligned}\right.
\end{equation*}
The two first equations imply that $E[e^{\beta(X_1 - m_1^*)}] = E[e^{\beta(X_2 - m_2^*)}]$, which in turn gives that,
\begin{equation}\label{equality}
\frac{\beta \sigma_1^2}{2} - m_1^* = \frac{\beta \sigma_2^2}{2} - m_2^*.
\end{equation}
The third equation gives $e^{\frac{\beta^2\sigma_1^2}{2} - \beta m_1^*} +e^{\frac{\beta^2\sigma_2^2}{2} - \beta m_2^*} + \alpha e^{\rho\beta^2\sigma_1\sigma_2+\frac{\beta^2}{2}(\sigma_1^2+\sigma_2^2)-\beta(m_1^*+m_2^*)}=2+\alpha$. Thanks to \eqref{equality} and denoting $Q = e^{\frac{\beta^2 \sigma_i^2}{2} - \beta m_i^*}$, we get that, $\alpha e^{\rho\beta^2\sigma_1\sigma_2}Q^2 + 2Q - (2+\alpha)=0$. Taking the positive solution of the last equation gives $Q = \frac{-1 +\sqrt{1 +\alpha(\alpha+2)e^{\rho\beta^2\sigma_1\sigma_2}}}{\alpha e^{\rho\beta^2\sigma_1\sigma_2}}$. Now, denoting by $\mathrm{SRC} = -\log(Q)$, we obtain that $m_i^* = \frac{\beta \sigma_i^2}{2} + \frac{1}{\beta}\mathrm{SRC}$.
\end{proof}
In all this example, we fix $\alpha = 1$, $\beta =1$  and $\sigma_1 = \sigma_2 = 1$. With $\rho \in \{-0.5, 0, 0.5\}$, we obtain the exact values in the table below. Note that since we have $X_1\sim X_2 \sim \mathcal{N}(0, 1)$ and $l$ is permutation invariant, it follows that $m_1^*=m_2^*$. 
\begin{table}[H]
\caption{Exact optimal risk allocations.}
\centering
\begin{tabular}{|c|c|}
\hline
$\rho$ & $m_1^*=m_2^*$\\
\hline
$-0.5$ & $0.3868$\\
\hline
$0$ & $0.5$\\
\hline
$0.5$ & $0.6364$\\
\hline
\end{tabular}
\end{table}
For RM/PR algorithms, we used a number of steps $n = 10^5$. As for the compact $K$, we took $K=[0, 2]^3$  and $Z_0$ was taken uniformly on $K$. We run the different algorithms for $\gamma = 1$ and $\gamma = 0.7$. We chose an averaging parameter $t=10$ and we set $c=2$ in a first step. Figure \ref{AsFig1} shows that, for different values of $\rho \in \{-0.5, 0, 0.5\}$, our RM algorithm with $\gamma = 1$ converges relatively quickly to the optimal allocations, whereas when $\gamma = 0.7$, noise is still persisting. This is due to the step parameter $c$ as discussed in the previous section. In order to get a smoother numerical behaviour, two solutions are available to us:
either we use PR averaging (c.f. Figure \ref{AsFig1}), or we reduce the value of the parameter $c$ (Figure \ref{AsSmaller}). 
\begin{figure}[H]
    \centering
    \begin{subfigure}{0.4\textwidth}
        \includegraphics[width=\textwidth]{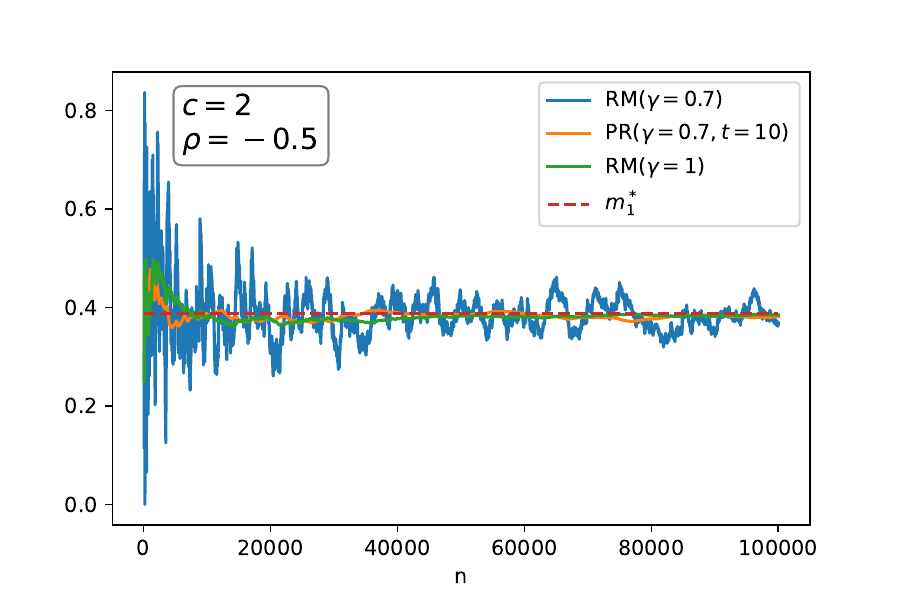}
    \end{subfigure}
    \begin{subfigure}{0.4\textwidth}
        \includegraphics[width=\textwidth]{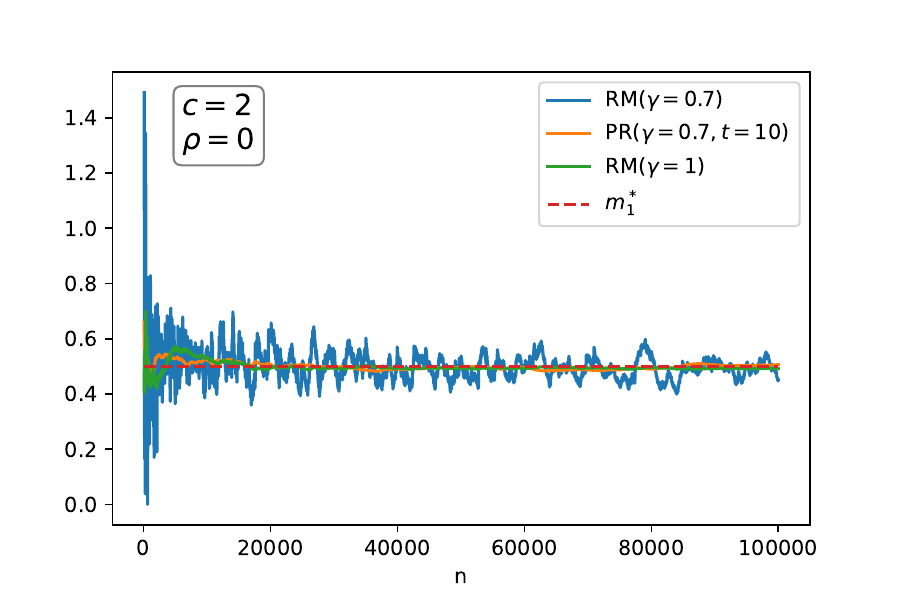}
    \end{subfigure}
    \begin{subfigure}{0.4\textwidth}
        \includegraphics[width=\textwidth]{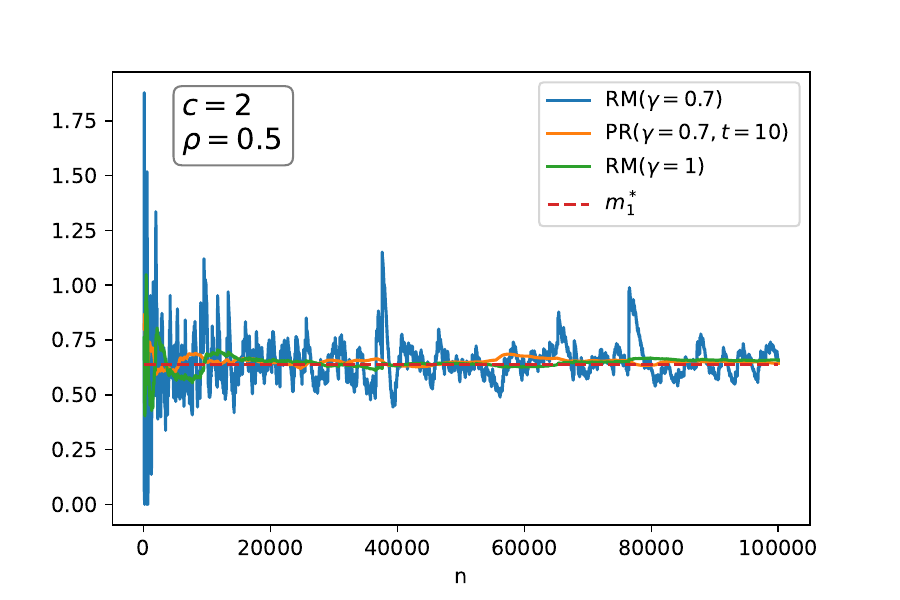}
    \end{subfigure}
    \caption{Consistency of RM/PR estimators with for different values of $\rho$.}
    \label{AsFig1}
\end{figure}
\begin{figure}[H]
    \centering
    \begin{subfigure}[h]{0.4\textwidth}
        \includegraphics[width=\textwidth]{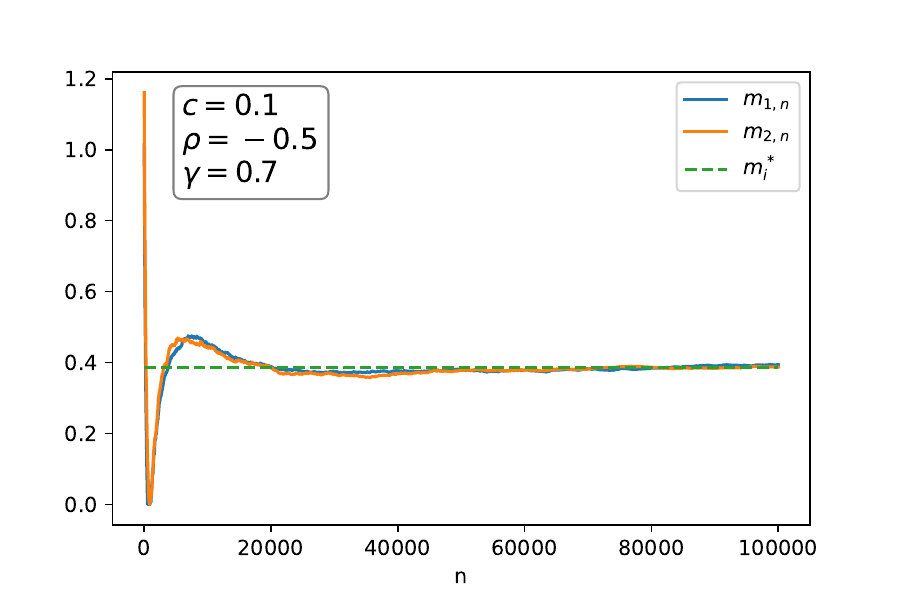}
    \end{subfigure}
    \begin{subfigure}[h]{0.4\textwidth}
        \includegraphics[width=\textwidth]{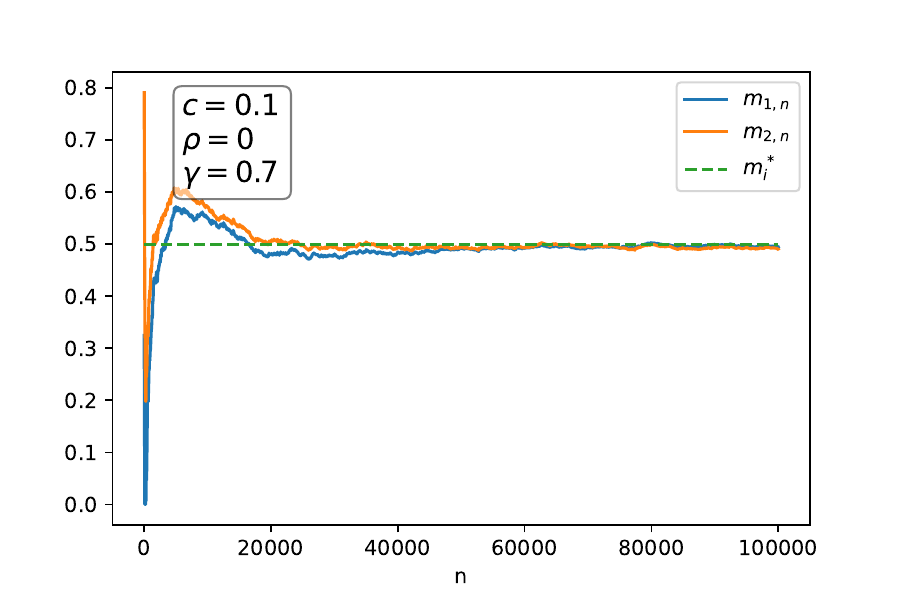}
    \end{subfigure}
    \begin{subfigure}[h]{0.4\textwidth}
        \includegraphics[width=\textwidth]{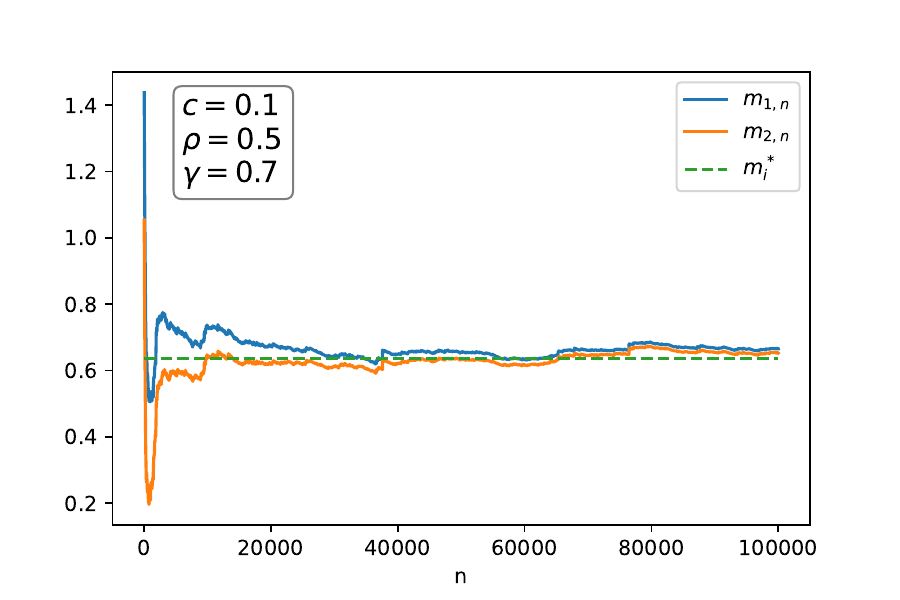}
    \end{subfigure}
    \caption{Consistency of RM estimators with $c = 0.1$ for different values of $\rho$.}
    \label{AsSmaller}
\end{figure}
Figure \ref{figCT} shows the increase of computational time when the error become smaller.  We have also compared in Table \ref{comparison} the computational time (CT)  of the RM algorithms (with $c=2$ and $\gamma=1$) with the Monte Carlo method used in \cite{armenti}. As it is often the case, the RM algorithm is slower than the Monte Carlo method in \cite{armenti}. However, as stated above, the latter does not provide any convergence results, and might not converge in practical applications.
\begin{figure}[H]
\centering
\includegraphics[width=0.5\textwidth]{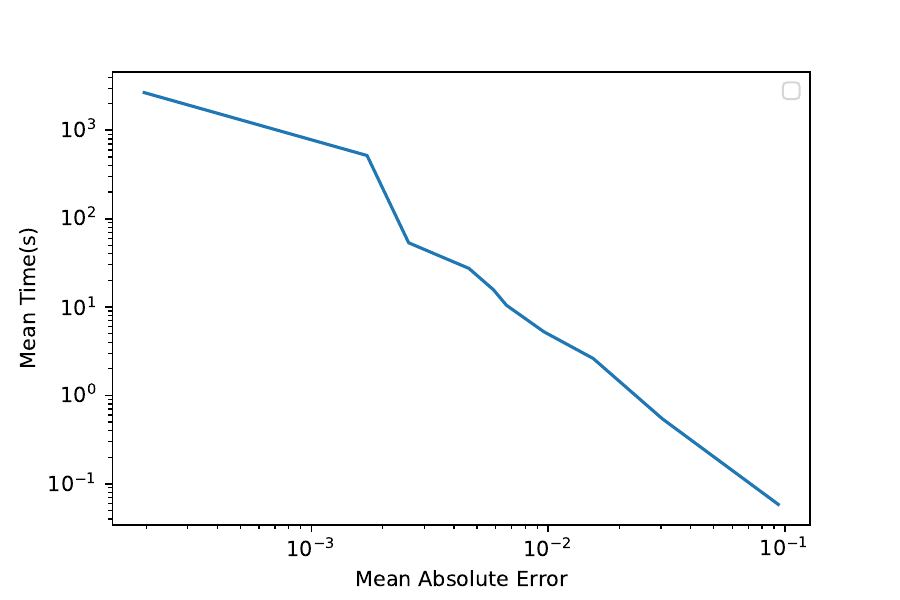}
\caption{Computational time  (in seconds) as a function of the (average) absolute error (log log scale), for the RM algorithm ($\gamma=1$, $c=2$ and $\rho=0.5$).}
\label{figCT}
\end{figure}
\begingroup
\setlength{\tabcolsep}{6pt} % Default value: 6pt
\renewcommand{\arraystretch}{1.2}
\begin{table}[H]
\centering
\begin{tabular}{|c|c|c|}
\hline
Absolute error & {CT RM (s)}  & { CT Monte Carlo (s)}\\
\hline
$2 \times 10^{-3}$ & 50 & 0.4 \\
\hline 
$2\times 10^{-4}$ & 2673 & 54 \\
\hline
\end{tabular}
\caption{ Comparison of computation times for RM algorithms ($\gamma=1$ and $c=2$) and Monte Carlo Method in \cite{armenti} ($m^*_1=m_2^*= 0.6364$).}
\label{comparison}
\end{table}
\endgroup

Note that we can easily verify that all conditions in \ref{Aas} and \ref{Aan} hold. We can also verify thanks to the exact formula of $\Sigma^*$, that this matrix is positive definite for the different values of $\rho$ used. This is a condition needed in Theorem \ref{aNormPoly}. \\
For any random estimator, constructing confidence intervals is important to assess the error in the estimation. For PR estimator, confidence interval can be obtained in one simulation run after estimating matrices $\Sigma^*$ and $A$ and hence the asymptotic variance matrix $V$. Figure \ref{Vest} shows the convergence, in the case $\rho = 0$, of the estimator of $V_n = A_n^{-1} \Sigma_n (A_n^{-1})^\intercal$  where $A_n$ and $\Sigma_n$ are as introduced in Proposition \ref{estVarJac}.
\begin{figure}[H]
    \centering
    \begin{subfigure}[h]{0.45\textwidth}
        \includegraphics[width=\textwidth]{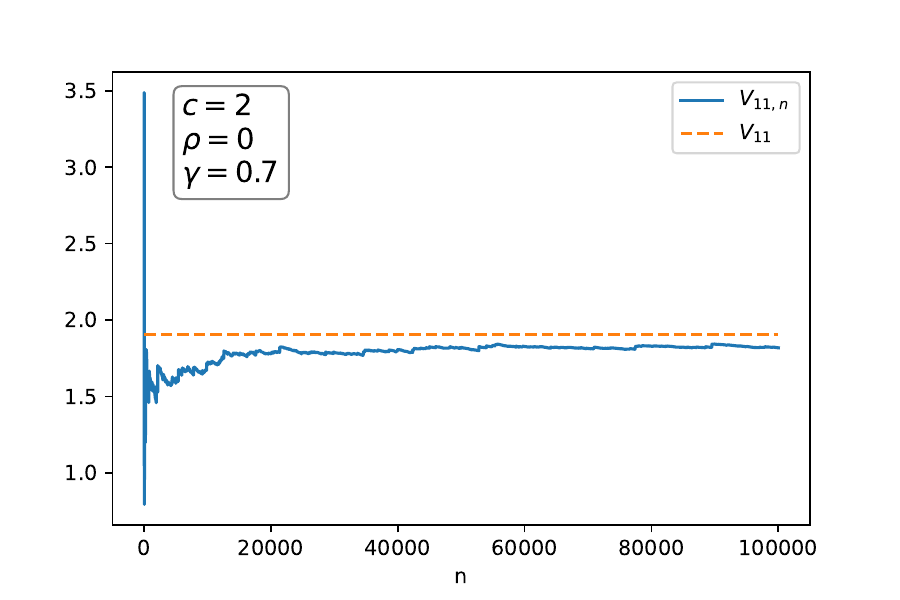}
    \end{subfigure}
    \begin{subfigure}[h]{0.45\textwidth}
        \includegraphics[width=\textwidth]{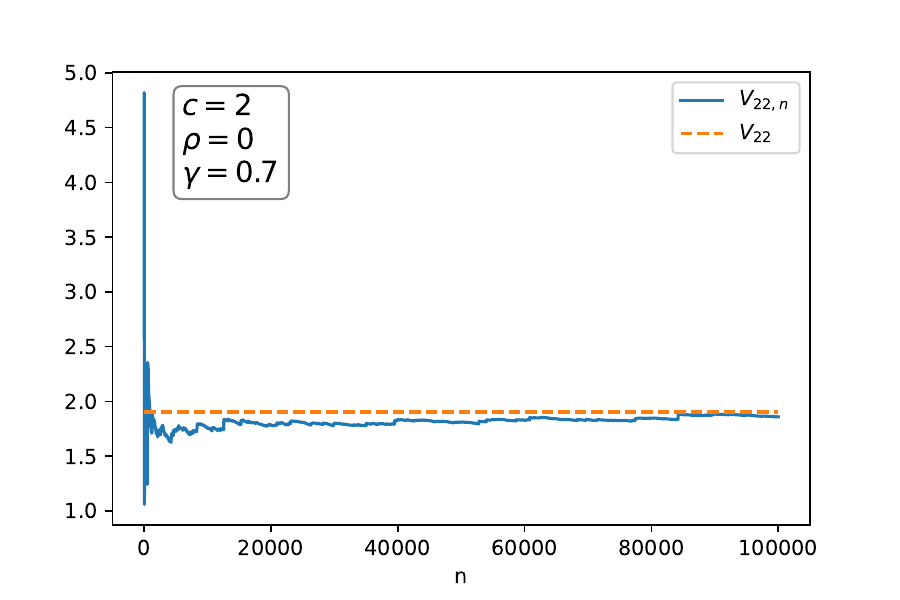}
    \end{subfigure}
    \caption{Convergence of the estimator $V_n$.}
    \label{Vest}
\end{figure}
In the following table, we give the estimated confidence interval for PR estimator with a confidence coefficient of $95\%$:
\begingroup
\setlength{\tabcolsep}{10pt} % Default value: 6pt
\renewcommand{\arraystretch}{1.5}
\begin{table}[H]
\caption{Confidence intervals for PR estimators.}
\centering
\begin{tabular}{|c|c|c|}
\hline
$\rho$ & CI for $m_1^*$ & CI for $m_2^*$\\
\hline
$-0.5$ & $[0.3772, 0.4047]$ & $[0.3679, 0.3949]$\\
$0$ & $[0.4962, 0.5259]$ & $[0.4912, 0.5213]$ \\
$0.5$ & $[0.6194, 0.6629]$ & $[0.6203, 0.6665]$\\
\hline
\end{tabular}
\end{table}
\endgroup
As for RM estimators, it is difficult to estimate the asymptotic covariance matrix due to its complexity. In order to visualize the normal behaviour of these estimators, we give the empirical probability density function (EPDF) in both cases $\gamma = 1$ and $\gamma = 0.7$. To this end,  we use again a number of steps $n = 100000$ and we repeat the procedure $N=10000$  times. We restrict our attention to the case $\rho =0$.
Figure \ref{ANFig1} shows that $D_{n,i} = \sqrt{n ^\gamma}(m_{n,i}-m_i^*), i \in \{1, 2\}$ are very close to a normal distribution.
\begin{figure}[H]
    \centering
    \begin{subfigure}[h]{0.45\textwidth}
        \includegraphics[width=\textwidth]{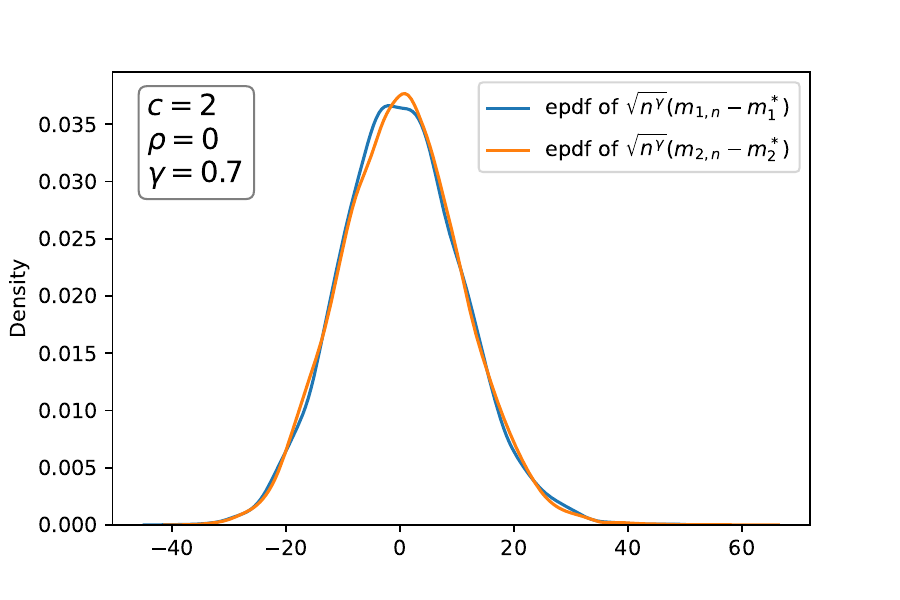}
    \end{subfigure}
    \begin{subfigure}[h]{0.45\textwidth}
        \includegraphics[width=\textwidth]{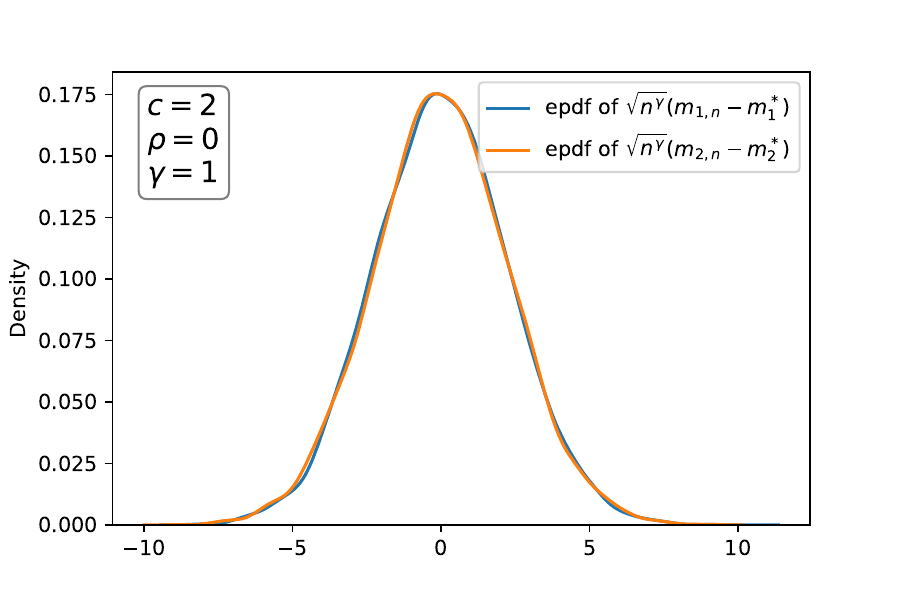}
    \end{subfigure}
    \caption{Empirical cumulative density function of $m_n - m^*$.}
    \label{ANFig1}
\end{figure}
In order to appreciate the quality of convergence of RM estimators, we also give the empirical cumulative density function (ECDF) of the error $m_n - m^*$. 
\begin{figure}[H]
    \centering
    \begin{subfigure}[h]{0.45\textwidth}
        \includegraphics[width=\textwidth]{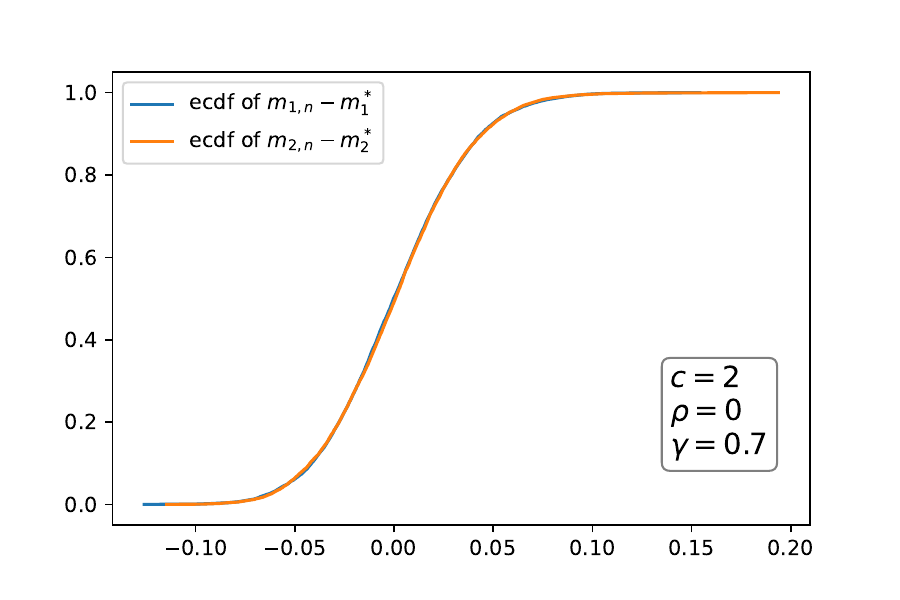}
    \end{subfigure}
    \begin{subfigure}[h]{0.45\textwidth}
        \includegraphics[width=\textwidth]{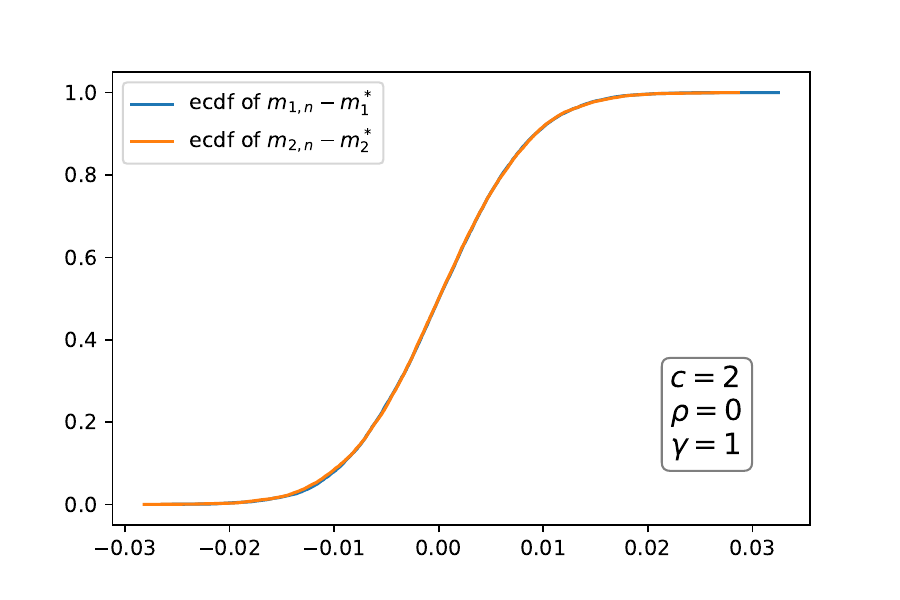}
    \end{subfigure}
    \caption{Empirical cumulative density function of $m_n - m^*$.}
    \label{ECDF}
\end{figure}
From the two figures above, the width of the $90\%$ confidence interval of the RM estimator for the case $\gamma =0.7$ is approximately $8\%$ and for the case $\gamma = 1$ is roughly $2\%$.
\subsection{Second example}
\label{SecondExample}
As a second example, we will consider consider the following loss function used in \cite{armenti}:
$$l(x_1,..., x_d) = \sum_{i=1}^d x_i + \frac{1}{2} \sum_{i=1}^d (x_i^+)^2 + \alpha \sum_{i < j} x_i^+ x_j^+.$$
\subsubsection{First case: Gaussian distribution and $d=2$}
We start by a simple case where we fix $d=2$ and use standard two dimensional Gaussian distribution for the loss vector $X$. We take $K=[0, 2]^3$, $n = 10^5$, $t = 10$, $\alpha = 1$ and  $c = 6$. Again, we compare RM and PR estimators for different values of $\rho$. {Note that  all assumptions can be easily  verified except the one that requires the matrix $cA +I/2$ to be a Hurwitz matrix. However, the PR estimator solve this issue. } \\
The following figure \ref{AsFig2} allows us to draw the same conclusions as in the previous example: RM estimator with $\gamma = 1$ and PR estimator are better than RM estimator with $\gamma = 0.7$. RM estimator with $\gamma = 0.7$ is noisy and one can remediate to this by choosing a smaller value of $c$ as we did in the first example.
\begin{figure}[H]
    \centering
    \begin{subfigure}[h]{0.4\textwidth}
        \includegraphics[width=\textwidth]{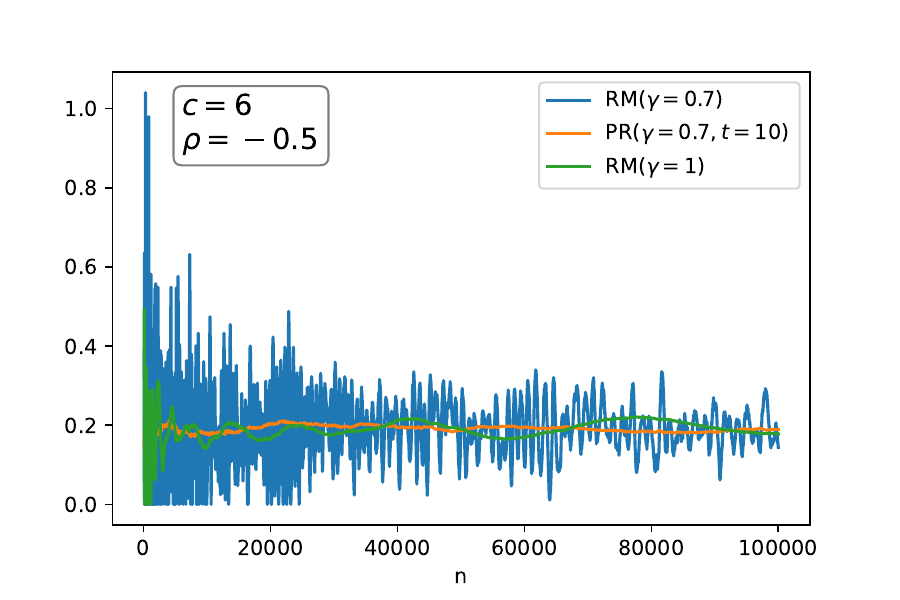}
    \end{subfigure}
    \begin{subfigure}[h]{0.4\textwidth}
        \includegraphics[width=\textwidth]{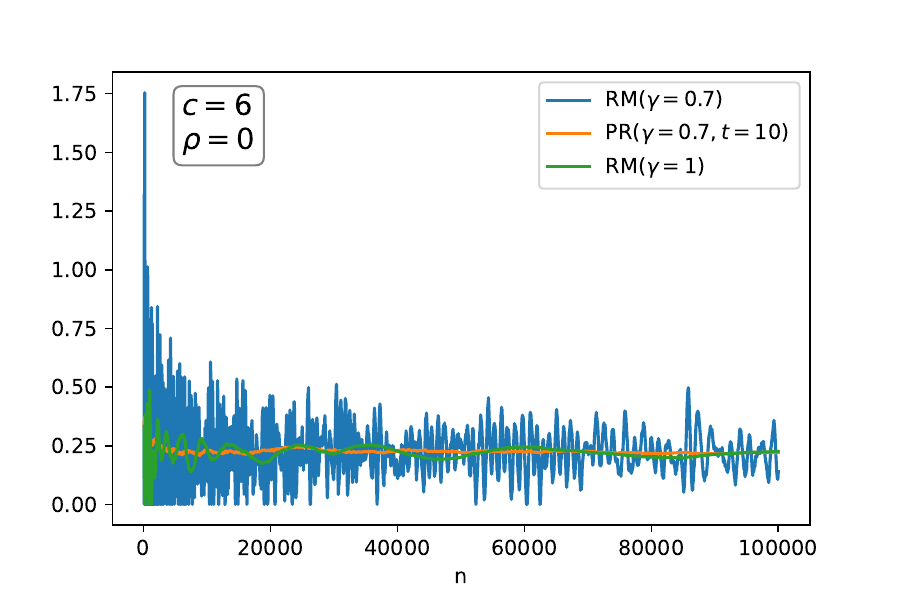}
    \end{subfigure}
    \begin{subfigure}[h]{0.4\textwidth}
        \includegraphics[width=\textwidth]{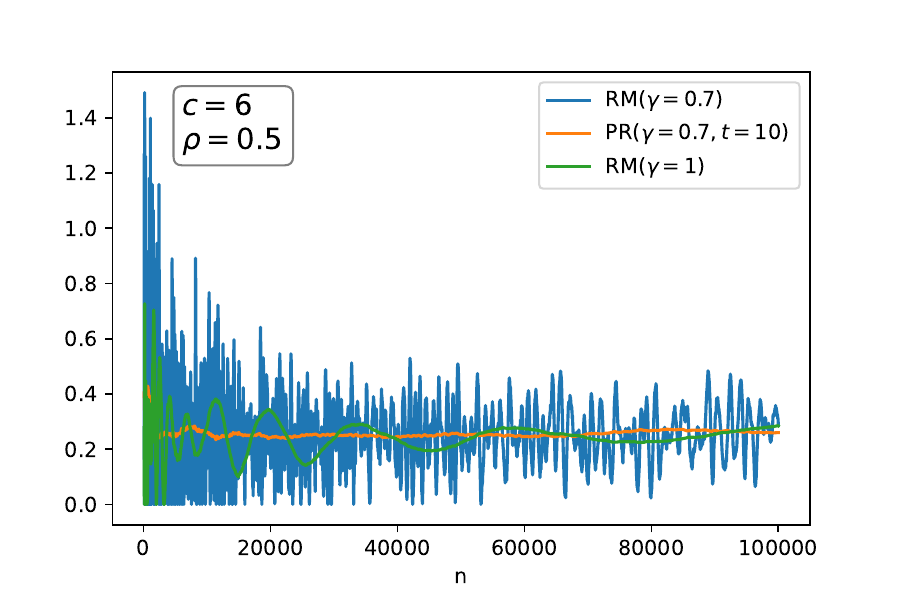}
    \end{subfigure}
    \caption{Consistency of RM/PR estimators with for different values of $\rho$.}
    \label{AsFig2}
\end{figure}
In order to assess the accuracy of our PR estimator, we give the confidence interval with a $95\%$ confidence coefficient (CI), using the estimators of Proposition \ref{estVarJac}. {Since $m_1^* =m_2^*$ in this example, we only present in Table \ref{tablePR} the results for the estimator $\hat{m}_1^*$ of $m_1^*$.}
\begingroup
\setlength{\tabcolsep}{10pt} % Default value: 6pt
\renewcommand{\arraystretch}{1.5}
\begin{table}[H]
\caption{Confidence intervals for PR estimators.}
\centering
\begin{tabular}{|c|c|c|}
\hline
$\rho$ & $\hat{m}_1^*$ & CI  \\
\hline
$-0.5$ & $0.188$ & $[0.1790, 0.2089]$ \\
$0$ &$0.21$ & $[0.1963, 0.2303]$  \\
$0.5$ &$0.25$ & $[0.2415, 0.2769]$ \\
\hline
\end{tabular}
\label{tablePR}
\end{table}
\endgroup
For RM estimator, we plotted the EPDF of $D_{n,i}, i\in \{1, 2\}$ as well as the ECDF of the error $m_n - m^*$ for the case $\rho = 0$ and $\gamma = 1$. These figures shows that the length of the confidence interval of 90\%  in the case $\gamma = 0.7$ is much higher that in the case $\gamma = 1$ (approx $0.2$ against $0.04$).
\begin{figure}[H]
    \centering
    \begin{subfigure}[h]{0.45\textwidth}
        \includegraphics[width=\textwidth]{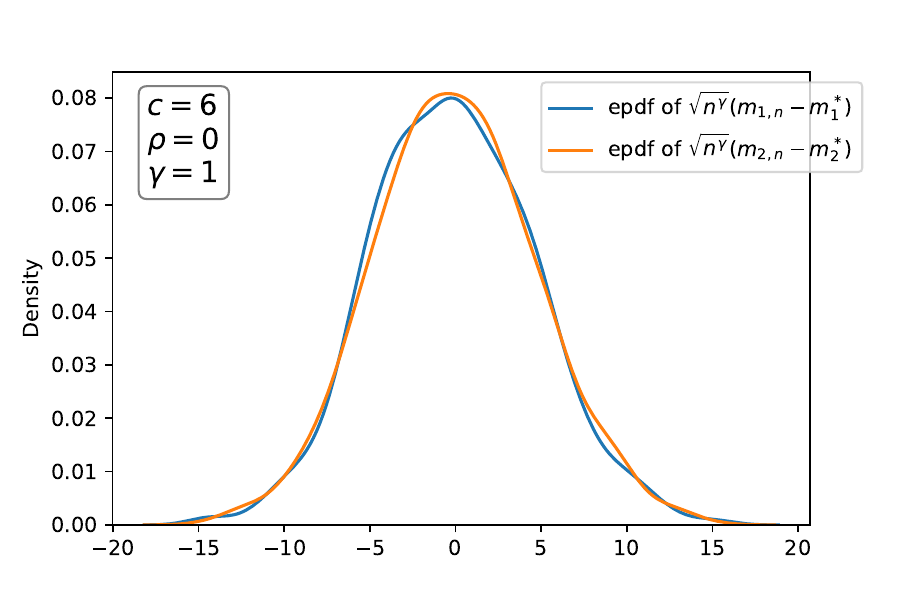}
    \end{subfigure}
    \caption{Empirical cumulative density function of $m_n - m^*$.}
    \label{ANFig2}
\end{figure} 
\begin{figure}[H]
    \centering
    \begin{subfigure}[h]{0.45\textwidth}
        \includegraphics[width=\textwidth]{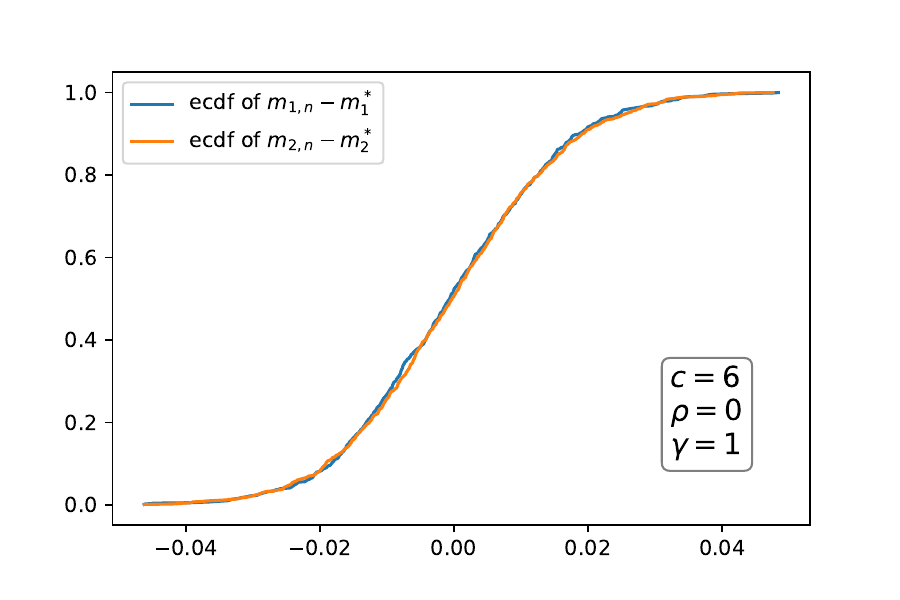}
    \end{subfigure}
    \caption{Empirical cumulative density function of $m_n - m^*$.}
    \label{ECDF2}
\end{figure}
\subsubsection{Second case: Compound Poisson Distribution and higher dimensions}
\label{secondCase}
In this section, we propose to use compound Poisson processes to model the loss vector $X$. The scope of application of compound Poisson processes is very wide. It ranges from statistical physics and biology to financial mathematics. In biology, they are used to study dynamics of populations. In the modern financial modeling, compound Poisson processes are used to describe dynamics of risk factors such as interest rates (see for instance \cite{compoundRates}), foreign exchange rates and option pricing (see \cite{catastrophe}). In actuarial science, compound processes are extensively used to model claims sizes and to compute the ruin probability, i.e. the probability that the initial reserves increased by premiums received from clients and decreased by their claims, drops below zero.\\
More precisely, given a final time $T$, we consider a multivariate Poisson random vector $N^T=(N_1^T,...,N_{d}^T)$, where each $N_i^T \sim \mathcal{P}(\lambda_i T)$ and the loss corresponding to the $i^{\mathrm{th}}$ component is $X_i = \sum_{k=1}^{N_i^T} G_i^k$ and $(G_i^k)_k$ is an i.i.d sequence representing the jump sizes and independent of $N_i^T$. We will take two examples for the distribution of the jumps sizes: One with a Gaussian distribution and another one with an exponential one. The correlation between the different components of $X$ will be done through the correlations between components of $N^T$.
In what follows, we detail the method of generating a multivariate Poisson random vector, $N = (N_1,...,N_d)$ with a vector of corresponding intensities $(\lambda_1,...,\lambda_d)$. To do so, we will use a method that is based on the Gaussian vectors. More precisely, denote $\eta = (\eta_1,...,\eta_d)$ to be a Gaussian random vector having a centered normal distribution with correlation matrix $R = (\rho_{kl})$ and $\Phi$ to be the standard normal cdf. Then, the random vector $\xi = (\Phi(\eta_1),...,\Phi(\eta_d))$ has a multivariate distribution with standard uniform marginal distributions. Let $P_\lambda(x)  = \sum_{j=0}^{[x]}(\lambda^j/j!)e^{-\lambda}$ be the cdf of the Poisson distribution with parameter $\lambda$. Now, consider the vector $\zeta=(\zeta_1,...,\zeta_d)$ where $\zeta_k = P^{-1}_{\lambda_k}(\Phi(\eta_k)), k=1,...,d$. $\zeta$ has therefore Poisson marginal distributions with intensities $(\lambda_1,...,\lambda_d)$. We can express the correlation coefficient  $\rho_{kl}^*=\mathrm{corr}(\zeta_k, \zeta_l)$ as a function of $\rho_{kl} = \mathrm{corr}(\eta_k, \eta_l)$: 
\begin{align*}
\rho_{kl}^* &= \frac{E(\zeta_k \zeta_l)-E(\zeta_k)E(\zeta_l)}{\sigma(\zeta_k)\sigma(\zeta_l)}\\
&= \frac{E(\zeta_k \zeta_l)-\lambda_k \lambda_l}{\sqrt{\lambda_k \lambda_l}}.
\end{align*}
We need to express the expectation $E(\zeta_k \zeta_l)$ as a function of $\rho_{kl}$. We have:
\begin{align*}
E(\zeta_k \zeta_l) &= E\left[P^{-1}_{\lambda_k}(\Phi(\eta_k))P^{-1}_{\lambda_l}(\Phi(\eta_l)) \right]\\
&= \sum_{m=1}^\infty \sum_{n=1}^\infty mn~P(\zeta_k = m, \zeta_l = n)\\
&= \sum_{m=1}^\infty \sum_{n=1}^\infty mn~P(u_{m-1}^{k} \le \Phi(\eta_k) \le u_m^{k}, u_{n-1}^{l} \le \Phi(\eta_l) \le u_n^{l}),
\end{align*}
where $u_j^i = P_{\lambda_i}(j), i,j=1,...,d$. It remains to explicit the probabilities in the last equality. If we denote $\Phi_2(\cdot, \cdot, \rho_{kl})$ the bivariate Normal distribution function, we get finally that,
\begin{align*}
Z_{mn}(\rho_{kl}) &\vcentcolon= P(u_{m-1}^{k} \le \Phi(\eta_k) \le u_m^{k}, u_{n-1}^{l} \le \Phi(\eta_l) \le u_n^{l})\\
&=\Phi_2(A_m^k, B_n^l, \rho_{kl})-\Phi_2(A_{m-1}^k, B_n^l, \rho_{kl})-\Phi_2(A_m^k, B_{n-1}^l, \rho_{kl})+\Phi_2(A_{m-1}^k, B_{n-1}^l, \rho_{kl})
\end{align*}
where $A_m^k = \Phi^{-1}(P_{\lambda_k}(m))$ and $B_n^l = \Phi^{-1}(P_{\lambda_l}(n))$. As a conclusion, we obtain,
\begin{equation}\label{calib}
\sum_{m=1}^\infty \sum_{n=1}^\infty mn Z_{mn}(\rho_{kl}) = \lambda_k \lambda_l + \rho_{kl}^*\sqrt{\lambda_k \lambda_l}.
\end{equation}
The equation \eqref{calib} gives an implicit relation between $\rho_{kl}^*$ and $\rho_{kl}$. It also involves two infinite sums which makes it hard to solve. In practice, one needs to truncate this sum and choose some appropriate upper-limits $M^*$ and $N^*$. We are then able to compute the elements of the correlation matrix $\rho$ of the Gaussian vector given the correlation matrix $\rho^*$ of the vector $N$. However, there is a problem of sufficient conditions for a given positive semi-definite matrix to be a correlation matrix of a multivariate Poisson random vector. This issue is tackled in \cite{poisson} where it is shown that each $\rho_{kl}^*$ has to be in a certain range,
\begin{equation}\label{constRho}
-1 < \rho_{kl}^{\min} \le \rho_{kl}^* \le \rho_{kl}^{\max} \le 1.
\end{equation}
\begin{algorithm}[H]
\caption{Algorithm for generating a sample of $X$ with Compound Poisson Distribution}\label{algo}
\KwInput{$(\lambda_i)_{i=1,...,d}$, $(\sigma_i)_{i=1,...,d}$, $T$ , and $\rho^*$ correlation matrix of $N_T$\;}
\KwEnsure{For each $k > l$, $\rho_{kl}^*$ verifies the inequality in \eqref{constRho}\;}
Solve the equation \eqref{calib} to find $\rho$ the correlation of the Gaussian vector $\eta = (\eta_1,...,\eta_d)$\;
Generate a sample of Gaussian vector $\eta$ with correlation matrix $\rho$ and for $i=1,...,d$\;
\For{$i=1,...,d$}{
Set $N_i^T = P^{-1}_{\lambda_i T}(\Phi(\eta_i))$\;
Generate a i.i.d sample of $G_i^k$ of size $N_i^T$\;
Set $X_i = \sum_{k=1} G_i^k$.}
\KwOutput{X\;}
\end{algorithm}
The following figure shows the covariance matrix for the loss vector $X$ of dimension $d=10$ obtained by generating a random correlation matrix $(\rho_{kl})$ and using a Gaussian distribution for the jump sizes, i.e. $G_i^k \sim \mathcal{N}(1, 1)$. The intensity vector was taken uniformly in $[1,3]^{10}$.
\begin{figure}[H]
    \centering
    \includegraphics[scale = 0.8]{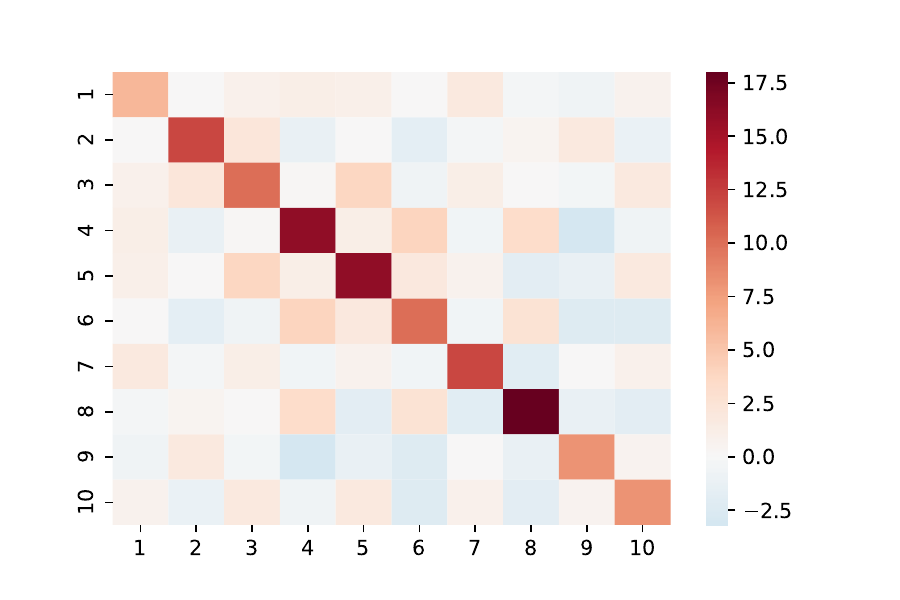}
    \caption{Correlation matrix of the vector loss $X$ with Gaussian jump sizes with means and variances equal to $1$.}
    \label{correlation}
\end{figure}
Setting $ K = [-20, 20]^{10}\times[0, 20]$, the averaging parameter $t=10$ and $c=6$, $\gamma = 0.7$ and the number of steps $n=100000$, we obtain the following optimal allocations for both cases $\alpha = 0$ and $\alpha = 1$.
\begin{figure}[H]
    \centering
    \includegraphics[scale=0.8]{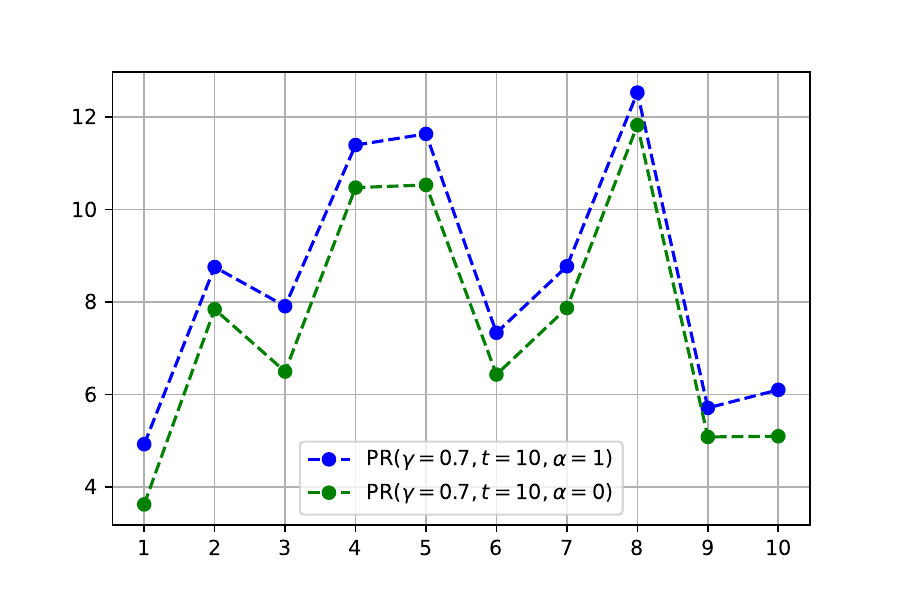}
    \caption{PR estimators for optimal allocations.}
    \label{optAlloc10}
\end{figure}
The above figure shows that there are components with the same optimal allocations for the case $\alpha = 0$. This is something we expect to see, since with $\alpha = 0$, correlations between components are not involved, so components with the same variance should have the same optimal allocations. This is the case for components $4-5$ and components $9-10$. However, once $\alpha$ is taken non null, we see that the same components have no longer the same optimal allocations. For instance, component $10$ has higher optimal allocation than $9$ when $\alpha = 1$. This could be explained by the fact that component $10$ is more correlated with other components that have high variances, such as components $4$ and $5$, than component $9$. We now consider an exponential distribution for jump sizes as a second example, i.e. $G_i^k \sim \mathcal{E}(a_i)$. The parameters $a_i$ were generated randomly in $[0.2, 1.2]$. As for the other paramaters in this example, we took again $K=[-20, 20]^{10}\times[0, 20]$, $c=6$, $t=10$ and $\gamma = 0.7$. Covariance matrix of the loss vector $X$ in this case and estimators of the optimal allocations obtained through PR algorithms with a number of steps $n=100000$ together with corresponding confidence intervals are given in the following figures.
\begin{figure}[H]
\centering
    \includegraphics[scale=0.8]{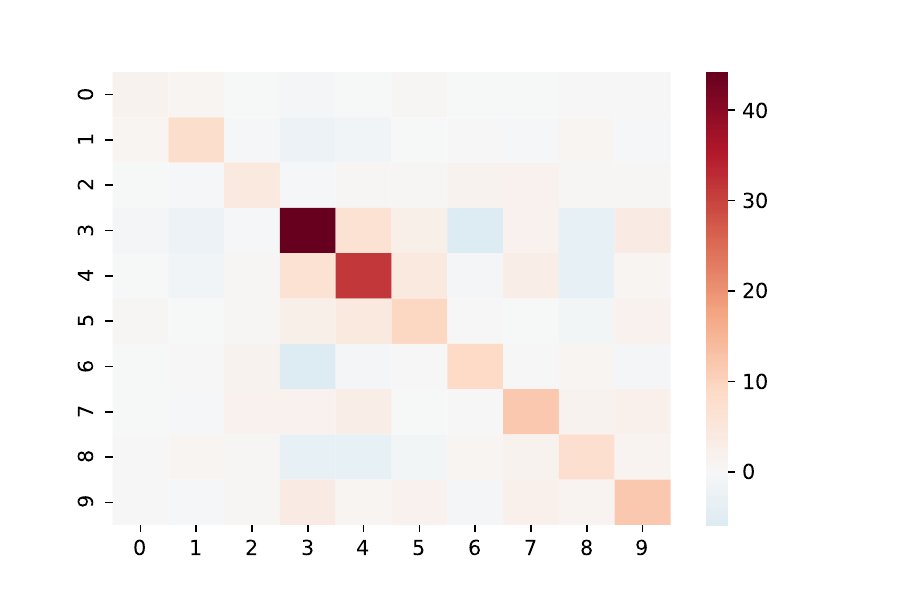}
    \caption{Correlation matrix of the vector loss $X$ with exponential distribution for the law of jump sizes.}
\end{figure}
\begin{figure}[H]
    \centering
    \begin{subfigure}[h]{0.6\textwidth}
        \includegraphics[width=\textwidth]{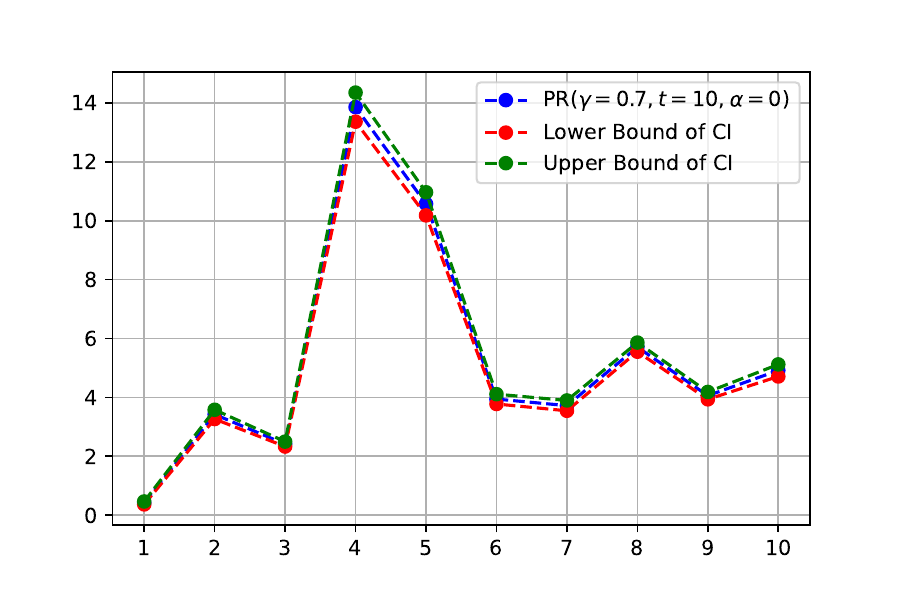}
    \end{subfigure}
    \begin{subfigure}[h]{0.6\textwidth}
        \includegraphics[width=\textwidth]{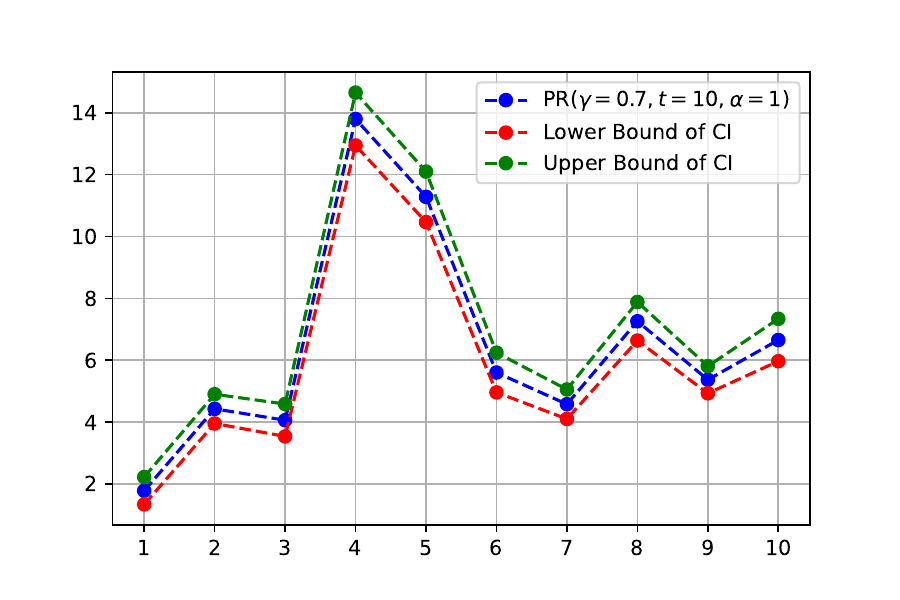}
    \end{subfigure}
    \caption{PR estimators for optimal allocations together with bounds of CI.}
\end{figure}
\section{Appendix}
\label{6}
\subsection{ODE method and related concepts}
\label{ODEMethod}

In this section, we recall some key concepts of the stability of an ODE $\dot{z} = h(z)$.
\subsubsection{Concepts of stability of an ODE}
A state $z^*$ is an equilibrium of the ODE if $h(z^*)=0$. To describe the behaviour of the system around the equilibrium, a number of stability concepts are needed. Let us first introduce the basic concepts of stability. To alleviate the notations, we will take $0$ as an equilibrium state.
\begin{Definition}
The equilibrium $z^*=0$ is said to be stable, if for any $R >0$, there exists $r>0$ such that if $||z(0)|| < r$, then $||z(t)|| < R$ for all $t\ge 0$. Otherwise, the equilibrium is unstable.
\end{Definition}
Essentially, this means, the system can be kept arbitrarily close to the origin by starting sufficiently close to it. This is also know as \textit{Lyapunov} stability. In some applications, Lyapunov stability is not enough and the concept of asymptotic stability is needed.
\begin{Definition}
An equilibrium point $z^*=0$ is asymptotically stable if it is stable, and if in addition, there exists some $r > 0$ such that $||z(0)|| < r$ implies that $z(t)\rightarrow 0$ as $t \rightarrow \infty$. The ball $B_r$ is called a domain of attraction of the equilibrium point.\\
If asymptotic stability holds for all initial states, the equilibrium is said to be globally asymptotically stable.
\end{Definition}

\subsubsection{Lyapunov function and invariant set}
The first property that need to be verified by a Lyapunov function is \textit{positive definiteness}.
\begin{Definition}
A scalar continuous function $V(z)$ is said to be locally positive definite if $V(0)=0$ and in around $0$, we have, $z \neq 0 \Rightarrow V(z) > 0$.\\
If the above property holds over the whole state space, then $V(z)$ is said to be globally positive definite.
\end{Definition}
The above definition implies that the function $V$ has a unique minimum at the origin $0$. 
Next, we define the \q{derivative of V} with respect to time along the system trajectory. Assuming that $V$ is differentiable, this derivative is defined as:
$$\dot{V}(z) = \frac{dV(z)}{dt} = \nabla V, \dot{z}= \nabla V\cdot h(z).$$
\begin{Definition}[Lyapunov function]
Let $V$ be a positive definite function and continuously differentiable. If its time derivative along any state trajectory is negative semi-definite, i.e., $$\dot{V}(z) = \nabla V_z\cdot h(z) \le 0,~~ \forall z,$$
then $V$ is said to be a Lyapunov function for the system.
\end{Definition}
It is possible to draw conclusions on asymptotic stability, with the help of the invariant set theorems introduced by LaSalle. The central concept in these theorems is that of invariant set, a generalization of the concept of equilibrium point.  

\begin{Definition}
Let $z(\cdot)$ be a solution of some ODE. A set $G$ is said to be an invariant set for this ODE if $z(0) \in G$ implies that $z(t) \in G, ~\forall t \ge 0$.
\end{Definition}
For instance, the singleton $\{z^*\}$ where $z^*$ is an equilibrium point is an invariant set. Its domain of attraction is also an invariant set. One other trivial invariant set is the whole state-space, $\underset{z_0}{\cup}\{z(t),~t>0,~z(0)=z_0\}$. \\
Informally, the Lasalle invariance theorem allows the identification of an ODE's limit set as a subset of  the largest invariant set contains in the set $\{z  ; \; W(z) =0\}$, with $W$ some well-chosen function (most of the time $W=\dot{V}$). 
We recall below Barbalat's Lemma (see e.g. Lemma 3 in \cite{fischer2013lasalle}), with is central to proving  LaSalle's invariance theorem.
\begin{Lemma}[Barbalat's Lemma]
\label{BarbalatLemma}
Let $\phi : \R^+ \to \R$ be a uniformly continuous function. Suppose that $\lim_{t\to \infty} \int_0^t \phi(s) ds <\infty$. Then, $\lim_{t\to \infty} \phi(t) = 0$. 
\end{Lemma}

\subsection{ Strong and weak convergence of constrained stochastic algorithms}
\label{AppendixKushner}

Let  $g : \R^n \to \R^n$ be a continuous function, and let $z^*\in \R^n$ be an equilibrium point of $h$, i.e. $h(z^*)= 0$. \\
The constrained stochastic approximation algorithm is defined by 
\begin{equation}
Z_{n+1} = \Pi_K[Z_n + \epsilon Y_n ], 
\end{equation}
with $(Y_n)$ a sequence of $\R^n$ random variables such that $E[Y_n | \mathcal{F}_n]= g(Z_n) $, $ \Pi_K$ the projection operator on the constraint set $K = \{ z ; \;  a_i \leq z_i \leq b_i\} \subset \R^n$ and $(\epsilon_n)$ a sequence decreasing to zero, such that $\sum_{n=0}^\infty \epsilon_n = \infty$.  
\paragraph{Continuous time interpolation}  let $t_0 = 0$ and for $n\geq 1$,  $t_n = \sum_{i=0}^{n-1} \epsilon_i$. The piecewise constant continuous-time interpolation of the sequence $(Z_n)$ is defined  by 
\begin{equation*}
Z^0(t) = Z_n, \quad \forall \; t \in [t_n, t_{n+1}[. 
\end{equation*}
The shifted process is defined by
\begin{equation}
\label{interpolation}
Z^n(t) = Z^0(t_n +t), \quad \forall t\geq 0. 
\end{equation}
Similarly, let 
\begin{equation*}
U^0(t) = \frac{Z_n-z^*}{\sqrt{\epsilon_n}}, \quad \forall \; t \in [t_n, t_{n+1}[, 
\end{equation*}
and 
\begin{equation}
\label{interpolation1}
U^n(t) = U^0(t_n +t), \quad \forall t\geq 0. 
\end{equation}
We recall below the assumption and statement of Theorem 5.2.1 (p.127) and 10.2.1 (p.329) in \cite{kushner}. Note that the framework presented here is restricted to the case where $E[Y_n|\mathcal{F}_n ] = g(Z_n)$ ($g$ does not depend on $n$). Hence,  some assumptions  are either trivial and not rewritten here or simplified. 
\paragraph{Strong convergence}
The assumption below can be found in Section 5.2 of \cite{kushner}.
\begin{Assumption}
\label{AssumptionKushner1}
\begin{enumerate}
\item \label{Kush1A1} (A2.1) $\sup E[|Y_n^2|] <\infty$.
\item \label{Kush1A3} (A2.3) $g$ is continuous. 
\item \label{Kush1A4} (A2.4) $\sum_i \epsilon_i^2 <\infty$. 
\end{enumerate}
\end{Assumption}
\begin{Theorem}[Theorem 5.2.1 in \cite{kushner} (partial)]
\label{ThKusher1}
Let $\{Z^n(\cdot)\}$ be the sequence of continuous-time processes defined by \eqref{interpolation} and let Assumption \eqref{AssumptionKushner1} holds. \\
There is a set $N$ of probability zero such that for $\omega \notin N$, the family of function $\{Z^n(\omega,\cdot)\}$ is equicontinuous. Furthermore, if  $z(\cdot)$ is a limit of a convergent subsequence,  then the function is solution of the projected ODE
\begin{equation*}
\dot{z} = g(z) + C(z), \quad C(z(t)) \in -\mathcal{C}(z(t)), 
\end{equation*}
and $\{Z_n(\omega)\}$ converge to some limit  set of the ODE in K. 
\end{Theorem}
\paragraph{Weak convergence} 
The assumptions below can be found in Section 10.2 and 10.4 of \cite{kushner}:
\begin{Assumption}
\label{AssumptionKushner2}
${}$
\begin{enumerate}
\item \label{A1} (A2.0) Either $\epsilon_n = \frac{1}{n}$ or $\mu_n = o(\epsilon_n)$, for  $\sqrt{\frac{\epsilon_n}{\epsilon_{n+1}}} = 1 + \mu_n$.
\item \label{A2} (A2.1) and  (A4.2)  $(Y_n\mathbf{1}_{|Z_n - z^*|\leq \rho})$ is uniformly integrable for some $\rho >0$. 
\item \label{A3}(A2.2)  Let  $(Z^n(\cdot))$ be the sequence of continuous time processes defined by \eqref{interpolation}. Then $(Z^n(\cdot))$ converges weakly to $z^*$.  
\item \label{A4} (A2.4) $g$ is continuously differentiable. 
\item \label{A5} (A2.6 and A4.4) The Jacobian matrix $A'$ of $g$ at $z^*$ is a Hurwitz matrix. If $\epsilon_n = \frac{1}{n}$,  then $A' + \frac{I}{2}$ is also a Hurwitz matrix. 
\item \label{A6}(A2.7)  For some $p > 0$ and $\rho > 0, \underset{|z-z^*| \le \rho}{\sup} m^{2+p}(z) < \infty$. Furthermore, 
$$E[(Y_n-g(Z_n))(Y_n- g(Z_n))^\intercal\mathbf{1}_{|Z_n - z^*|\leq \rho}]\to \Sigma_1$$ in probability, with $\Sigma_1$ some non-negative definite matrix. 
%\item \label{A7} (A4.1) The sequence $(Z_n)$ converges to $z^*$ with probability one. 
%\item \label{A8} (A4.3) There exists a constant $C>0$  and $\rho >0$ such that 
%$$\sup_n E[|Y_n|^2\mathbf{1}_{|Z_n - z^*|\leq \rho}|\mathcal{F}_n] \leq C, \quad  a.s.$$
\item \label{A9} (A4.5) 
\begin{itemize}
\item Case $\epsilon_n = \frac{1}{n}$: let $P'$ be the unique symmetric and positive definite solution of the Lyapunov equation $(A')^\intercal P' + P'A' = -I$, and $\lambda$ the largest positive number such that $I \geq \lambda P'$. Then $\lambda>1$. 
\item Case $\mu_n = o(\epsilon_n)$: Let $m(t)$ be the smallest integer $n$ such that $t_n \leq t < t_{n+1}$, with $m(t)=0$ for $t\leq 0$. Then for all $T>0$,  $\liminf_n \min_{m(t_n-T)\leq i \leq n} \frac{\epsilon_n}{\epsilon_i } = 1$. 
\end{itemize}
\end{enumerate}
\end{Assumption}
\begin{Theorem}[Theorem 10.2.1 in \cite{kushner} (partial)]
\label{ThKushner2}
 Assume that the assumptions \ref{AssumptionKushner1} and \ref{AssumptionKushner2} hold, and  let $U^n(\cdot)$ be the process defined by  \eqref{interpolation1}. \\
If $\mu_n =o(\epsilon_n)$, then the sequence of processes $(U^n(\cdot))$ converges weakly in the Skorohod space $D^n(\R^+)$ to the stationary process $U$ defined for all $t\geq 0$ by: 
\begin{equation}
\label{Udef1}
U(t) =  \int_{-\infty}^t e^{A'(t-s)}dW_s. 
\end{equation}
If $\epsilon_n =\frac{1}{n}$, then the sequence of processes $(U^n(\cdot))$ converges weakly to the stationary process $U$ defined for all $t\geq 0$ by: 
\begin{equation}
\label{Udef2}
U(t) =  \int_{-\infty}^t e^{(A'+\frac{I}{2})(t-s)}dW_s, 
\end{equation}
with in both cases $W: \R \to \R^n$ a Wiener process of covariance matrix $\Sigma_1$. 
\end{Theorem}
\begin{proof}
By Assumption \ref{AssumptionKushner1} and \ref{AssumptionKushner2}, the assumption (A2.0)-(A2.7)  p.329 of Theorem 10.2.1 are straightforward, at the exception of assumption (A2.3). The latter condition is also satisfied as a consequence of Theorem 10.4.1 p.341 in \cite{kushner} since the assumptions (A4.1)-(A4.5) of this Theorem are also satisfied by Assumption \ref{AssumptionKushner1} (1), Theorem \ref{ThKusher1}, and Assumption \ref{AssumptionKushner2} (5) and (7). 
\end{proof}

\bibliographystyle{apacite}
\bibliography{biblio}  %%% Uncomment this line and comment out the ``thebibliography'' section below to use the external .bib file (using bibtex) .

%%% Uncomment this section and comment out the \bibliography{references} line above to use inline references.
% \begin{thebibliography}{1}

% 	\bibitem{kour2014real}
% 	George Kour and Raid Saabne.
% 	\newblock Real-time segmentation of on-line handwritten arabic script.
% 	\newblock In {\em Frontiers in Handwriting Recognition (ICFHR), 2014 14th
% 			International Conference on}, pages 417--422. IEEE, 2014.

% 	\bibitem{kour2014fast}
% 	George Kour and Raid Saabne.
% 	\newblock Fast classification of handwritten on-line arabic characters.
% 	\newblock In {\em Soft Computing and Pattern Recognition (SoCPaR), 2014 6th
% 			International Conference of}, pages 312--318. IEEE, 2014.

% 	\bibitem{hadash2018estimate}
% 	Guy Hadash, Einat Kermany, Boaz Carmeli, Ofer Lavi, George Kour, and Alon
% 	Jacovi.
% 	\newblock Estimate and replace: A novel approach to integrating deep neural
% 	networks with existing applications.
% 	\newblock {\em arXiv preprint arXiv:1804.09028}, 2018.

% \end{thebibliography}

\end{document}